\newtheorem{thm}{Theorem}[section]
\newtheorem{cor}[thm]{Corollary}
\newtheorem{rem}[thm]{Remark}
\newtheorem{obs}[thm]{Observation}
\newtheorem{defn}[thm]{Definition}
\newtheorem{exam}[thm]{Example}
\numberwithin{equation}{section}
\newcommand{\mbb}{\mathbb}
\newcommand{\ra}{\rightarrow}
\newcommand{\z}{\zeta}
\newcommand{\pa}{\partial}
\newcommand{\ov}{\overline}
\newcommand{\Om}{\Omega}
\begin{document}
	\title[Weighted kernel functions on planar domains]{Weighted kernel functions on planar domains}
	\keywords{weighted Szeg\H{o} kernel, weighted Garabedian kernel, weighted Ahlfors map, weighted Carathéodory metric, Bergman kernel}
	\thanks{A part of the work was done when the first named author was an institute postdoctoral fellow at the Indian Institute of Technology Palakkad, India. The first named author was partially supported by SHENG-3 K/NCN/000286, research project UMO2023/Q/ST1/00048}
	\subjclass[2020]{Primary: 30C40; Secondary: 31A99}
	\author{Aakanksha Jain, Kaushal Verma} 
	\address{AJ: Department of Mathematics, Indian Institute of Technology, Kanjikode, Pudussery west, Palakkad, Kerala 678623, India}
    \address{Current address: Institute of Mathematics, Jagiellonian University, ul Lojasiewicza 6, 30-348 Kraków, Poland}
    
	\email{aakanksha.jain@uj.edu.pl, aakankshaj@alum.iisc.ac.in}
	
	\address{KV: Department of Mathematics, Indian Institute of Science, Bengaluru-560012, India}
	\email{kverma@iisc.ac.in}
	
\begin{abstract}
We study the variation of weighted Szeg\H{o} and Garabedian kernels on planar domains as a function of the weight. A Ramadanov type theorem is shown to hold as the weights vary. As a consequence, we derive properties of the zeros of the weighted Szeg\H{o} and Garabedian kernel for weights close to the constant function $1$ on the boundary. We further study the weighted Ahlfors map and strengthen results concerning its boundary behaviour. Explicit examples of the weighted kernels are presented for certain classes of weights. We highlight an interesting property of the weighted Szeg\H{o} and Garabedian kernels, implicit in Nehari’s work, and explore several of its consequences. Finally, we discuss the weighted Carathéodory metric, and describe relations of the weighted Szeg\H{o} and Garabedian kernel with certain classical kernel functions.
\end{abstract}
	
\maketitle

\section{Introduction}

\noindent Some aspects of weighted Szeg\H{o} kernels on a finitely connected planar domain were considered in \cite{JV}. We continue this study by strengthening some of the results obtained therein on the variation of the weighted Szeg\H{o} kernels as functions of the weight and noting some consequences about the zeros of the weighted Szeg\H{o} kernel. This has direct consequences about the zeros of the weighted Garabedian kernel. Continuing along this path leads to the consideration of a weighted Ahlfors map (cf. Nehari \cite{Nehari}) associated with the given planar domain, and we are able to strengthen an observation made by Nehari in \cite{Nehari} about its boundary behaviour. We then provide explicit formulae for the weighted Szeg\H{o} kernel, the weighted Garabedian kernel, and the weighted Ahlfors map for certain classes of weights. We conclude by highlighting a specific property about weighted Szeg\H{o} and Garabedian kernels that is of independent interest but is implicit in \cite{Nehari}, and exploring a few interesting consequences. Finally, we discuss the weighted Carathéodory metric, as well as expressions for the weighted Szeg\H{o} kernel and the weighted Garabedian kernel in terms of certain classical kernel functions.

\medskip

To make all this precise and fix the notation, let $\Omega \subset \mbb C$ be a smoothly bounded $n$-connected domain without isolated boundary points and $\varphi$ a positive real-valued $C^{\infty}$-smooth function on $\pa \Om$. Following \cite{Be0}, \cite{Be1}, let $L^2_{\varphi}(\Omega)$ be the Hilbert space of complex-valued functions on $\partial\Omega$ that are square integrable with respect to the arc length measure $\varphi\,ds$. Here, $ds$ is the differential of arc length and is given by  $ds = \vert z'(t) \vert \; dt$ where $z = z(t)$ is a smooth parametrization of $\partial \Omega$. In terms of the complex unit tangent vector function $T(z(t)) = z'(t) / \vert z'(t) \vert$, it is seen that $dz = z'(t) \; dt = T ds$. The inner product on $L^2_{\varphi}(\partial\Omega)$ is
\begin{equation}
\langle u,v\rangle_{\varphi}=\int_{\partial\Omega}u\bar{v}\,\varphi\,ds\quad \text{for }u,v\in L^2_{\varphi}(\partial\Omega).
\end{equation}
When $\varphi \equiv 1$, this reduces to the standard inner product $\langle u, v \rangle$. Note that $L^2_{\varphi}(\partial\Omega)=L^2(\partial\Omega)$ as sets. Also, $C^{\infty}(\partial\Omega)$ is dense in $L^2_{\varphi}(\partial\Omega)$ with respect to its norm. For $u\in C^{\infty}(\partial\Omega)$, the Cauchy transform of $u$ is 
\begin{equation}
(\mathcal{C}u)(z)=\frac{1}{2\pi i}\int_{\partial\Omega}\frac{u(\zeta)}{\zeta-z}d\zeta,\quad z\in\Omega
\end{equation}
which is holomorphic on $\Omega$. For $a \in \Omega$ and $z \in \partial \Omega$, let $C_a(z)$ be the conjugate of 
\[
\frac{1}{2 \pi i} \frac{T(z)}{z-a}.
\]
Then, $C_a$ is the Cauchy kernel which defines the Cauchy transform $\mathcal C$ in the sense that $\mathcal C u(z) = \langle u, C_z \rangle$. The weighted Cauchy kernel is defined to be $\varphi^{-1} C_a$ and the corresponding weighted Cauchy transform $\mathcal C_{\varphi}u$ satisfies
\[
(\mathcal C_{\varphi} u)(z) = \langle u, \varphi^{-1} C_z \rangle_{\varphi} = \langle u, C_z \rangle = (\mathcal C u)(z)
\]
and this shows that $\mathcal C_{\varphi} = \mathcal C$.

\medskip

Let $A^{\infty}(\Omega)$ denote the space of holomorphic functions on $\Omega$ that are in $C^{\infty}(\ov{\Omega})$. The Cauchy transform $\mathcal C$ maps $C^{\infty}(\partial\Omega)$ into $A^{\infty}(\Omega)$  and this allows the Cauchy transform to be viewed as an operator from $C^{\infty}(\partial\Omega)$ into itself. Let $A^{\infty}(\partial\Omega)$ denote the set of functions on $\partial\Omega$ that are the boundary values of functions in $A^{\infty}(\Omega)$. The Hardy space $H^2(\partial\Omega)$ is defined to be the closure in $L^2(\partial\Omega)$ of $A^{\infty}(\partial\Omega)$ and members of the Hardy space are in one-to-one correspondence with the space of holomorphic functions on $\Omega$ with $L^2$ boundary values. This Hardy space can be identified in a natural way with the subspace of $L^2_{\varphi}(\partial\Omega)$, equal to the closure of $A^{\infty}(\partial\Omega)$ in that space. Thus, there is no need to define $H^2_{\varphi}(\partial\Omega)$ separately. But we shall use the notation $H^2_{\varphi}(\partial\Omega)$ whenever there is a need to emphasize that the Hardy space is endowed with the inner product $\langle \cdot,\cdot\rangle_{\varphi}$. 

\medskip

The orthogonal projection $P_{\varphi}$ from $L^2_{\varphi}(\partial\Omega)$ onto $H^2(\partial\Omega)$ is the weighted Szeg\H{o} projection. For $a\in\Omega$, the evaluation functional $h\mapsto h(a)$ on $H^2_{\varphi}(\Omega)$ is continuous since
\[
\vert h(a)\vert = \vert \langle h, C_a \rangle \vert = \vert \langle h, C_a \varphi^{-1}\rangle_{\varphi}\vert \leq \Vert C_a \varphi^{-1} \Vert_{\varphi} \Vert h\Vert_{\varphi}
\]
and hence, there exists a unique function $S_{\varphi}(\cdot,a)\in H^2(\partial\Omega)$ such that 
\begin{equation}
h(a)=\langle h, S_{\varphi}(\cdot, a)\rangle_{\varphi}
\end{equation}
for all $h\in H^2(\partial\Omega)$. The function $S_{\varphi}(\cdot,\cdot)$ is the weighted Szeg\H{o} kernel of $\Omega$ with respect to the weight $\varphi$. 

\medskip

It can be seen that the weighted Szeg\H{o} kernel is hermitian symmetric, that is, for $z, w\in\Omega$, we have
$
S_{\varphi}(z,w)= \overline{S_{\varphi}(w,z)}.
$
Therefore, $S_{\varphi}(\cdot,\cdot)\in C^{\infty}(\Omega\times\Omega)$. 

\medskip

The weighted Szeg\H{o} kernel is the kernel of the weighted Szeg\H{o} projection because for all $u\in L^2_{\varphi}(\partial\Omega)$ and $a\in\Omega$
\begin{equation}
(P_{\varphi}u)(a) = \langle P_{\varphi}u, S_{\varphi}(\cdot, a)\rangle_{\varphi}=\langle u, S_{\varphi}(\cdot, a)\rangle_{\varphi} = \int_{\zeta\in\partial\Omega}S_{\varphi}(a,\zeta)u(\zeta)\varphi(\zeta) ds.
\end{equation}
Note that for all $h\in H^2(\partial\Omega)$
\[
h(a) = (\mathcal{C}h)(a) = \langle h, C_a\rangle = \langle h, \varphi^{-1}C_a\rangle_{\varphi} =  \langle h, P_{\varphi}(\varphi^{-1}C_a)\rangle_{\varphi}.
\]
Therefore, the weighted Szeg\H{o} kernel $S_{\varphi}(\cdot,a)$ is also given by the weighted Szeg\H{o} projection of $\varphi^{-1}C_a$.
That is,
\begin{equation}
S_{\varphi}(z,a)=(P_{\varphi} (\varphi^{-1}C_a))(z)=\int_{\zeta\in\partial\Omega}S_{\varphi}(z,\zeta)C_a(\zeta) ds
\end{equation}
for every $z\in\Omega$. In \cite{JV}, the following weighted Kerzman--Stein formula
\begin{equation}
P_{\varphi}(I+\mathcal{A}_{\varphi})=\mathcal{C}
\end{equation}
was shown to hold on $C^{\infty}(\partial\Omega)$, where $I$ is the identity and $\mathcal{A}_{\varphi} = \mathcal{C}-\mathcal{C}^{*}_{\varphi}$ is the weighted Kerzman-Stein operator. Since the Cauchy transform maps $C^{\infty}(\partial\Omega)$ into itself, it follows from the weighted Kerzman--Stein formula that the weighted Szeg\H{o} projection $P_{\varphi}$ also maps $C^{\infty}(\partial\Omega)$ into itself. Thus, $S_{\varphi}(\cdot,a)\in A^{\infty}(\Omega)$. In fact, $S_{\varphi}(z, w) \in \left( \overline \Om \times \overline \Om \right) \setminus \Delta$ where $\Delta = \{(z, z): z \in \pa \Omega\}$ is the diagonal in $\pa \Om \times \pa \Om$ as Theorem $3.2$ in \cite{JV} shows.

\medskip

A function $u$ in $L^2(\partial\Omega)$ is orthogonal to $H^2_{\varphi}(\partial\Omega)$ if and only if $u\varphi$ is orthogonal to $H^2(\partial\Omega)$. All the functions $v\in L^2(\partial\Omega)$ orthogonal to $H^2(\partial\Omega)$ are of the form $\overline{HT}$ where $H\in H^2(\partial\Omega)$. If $v\in C^{\infty}(\partial\Omega)$ then $H\in A^{\infty}(\Omega)$. So, the orthogonal decomposition of $\varphi^{-1}C_a$ is given by
\begin{equation}
\varphi^{-1}C_a = S_{\varphi}(\cdot,a) + \varphi^{-1} \overline{H_aT}
\end{equation}
where $H_a\in A^{\infty}(\Omega)$. Also, the above decomposition shows that $H_a$ is holomorphic in $a\in\Omega$ for fixed $z\in\partial\Omega$. The weighted Garabedian kernel of $\Omega$ with respect to $\varphi$ is defined by
\begin{equation}
L_{\varphi}(z,a) = \frac{1}{2\pi}\frac{1}{z-a} - i H_a(z).
\end{equation}
For a fixed $a\in\Omega$, $L_{\varphi}(z,a)$ is a holomorphic function of $z$ on $\Omega\setminus\{a\}$ with a simple pole at $z=a$ with residue $1/2\pi$, and extends $C^{\infty}$-smoothly to $\partial\Omega$. In addition, $L_{\varphi}(z,a)$ is holomorphic in $a$ on $\Omega$ for fixed $z\in \partial\Omega$. Moreover, it is known that (see \cite{Nehari})
\begin{equation}
L_{\varphi}(z,a) = - L_{1/\varphi}(a,z)
\end{equation}
for $z,a\in\Omega$. Therefore, for a fixed $z\in\Omega$, $L_{\varphi}(z,a)$ is a holomorphic function of $a$ on $\Omega\setminus\{z\}$ with a simple pole at $a=z$ and residue ${1}/2\pi$, and extends $C^{\infty}$-smoothly to $\partial\Omega$. Finally, for $z \in \partial \Omega$, $a \in \Omega$
\[
S_{\varphi}(a,z) = \overline{S_{\varphi}(a,z)} 
=
\frac{1}{\varphi(z)}\left(\frac{1}{2\pi i}\frac{T(z)}{z-a}-H_a(z) T(z)\right)
=
\frac{1}{i\varphi(z)}\left(\frac{1}{2\pi}\frac{1}{z-a}-iH_a(z)\right)T(z)
\]
shows that the weighted Szeg\H{o} kernel and the weighted Garabedian kernel satisfy the identity
\begin{equation}
S_{\varphi}(a,z) = \frac{1}{i\varphi(z)} L_{\varphi}(z,a) T(z).
\end{equation}
It follows from Theorem $3.3$ in \cite{JV} that the function $l_{\varphi}(z,w)$ defined by 
\[
L_{\varphi}(z,w) = \frac{1}{2\pi(z-w)} + l_{\varphi}(z,w)
\]
is holomorphic on $\Omega\times\Omega$ and extends to be in $C^{\infty} (\overline{\Omega}\times\overline{\Omega})$.

	\section{Convergence of weighted Szeg\H{o} kernels}

    The weights for which the weighted Szeg\H{o} kernel is well-defined are studied in \cite{pw-um}. We will focus only on a special class of weights that are smooth and positive functions on the boundary and study the continuity of the map $\varphi \mapsto S_{\varphi}$. In \cite{JV}, it was shown that for a sequence of positive real-valued $C^{\infty}$ functions $\varphi_k$ on $\partial\Omega$ such that $\varphi_k\rightarrow\varphi$ in the $C^{\infty}$-topology on $\partial\Omega$,  
		\begin{equation}
		\lim_{k\rightarrow\infty}S_{\varphi_k}(z,w) = S_{\varphi}(z,w)
		\quad
		\text{and}
		\quad
		\lim_{k\rightarrow\infty}l_{\varphi_k}(z,w) = l_{\varphi}(z,w)
		\end{equation}
		locally uniformly on $(\Omega\times\overline{\Omega})\cup(\overline{\Omega}\times\Omega)$. We extend this to the maximal possible set that is expected, in view of the fact that each Szeg\H{o} kernel is in $C^{\infty} (\overline{\Omega}\times\overline{\Omega}) \setminus \Delta$.   

	\begin{thm}\label{Szego}
		Let $\Omega\subset\mathbb{C}$ be a $C^{\infty}$-smoothly bounded $n$-connected domain and $\varphi$ a positive real-valued $C^{\infty}$ smooth function on $\partial\Omega$. 
		Let $\{\varphi_k\}_{k=1}^{\infty}$ be a sequence of positive real-valued $C^{\infty}$ functions on $\partial\Omega$ such that $\varphi_k\rightarrow\varphi$ in the $C^{\infty}$ topology on $\partial\Omega$ as $k\rightarrow\infty$. Then
		\begin{equation}
		\lim_{k\rightarrow\infty}S_{\varphi_k}(z,w) = S_{\varphi}(z,w) 
		\end{equation}
		locally uniformly on $(\overline{\Om}\times\overline{\Om})\setminus \Delta$. 
	\end{thm}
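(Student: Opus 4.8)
The plan is to isolate the genuinely new content. Since $\De=\{(z,z):z\in\pa\Om\}$ is only the \emph{boundary} diagonal, the target set $(\ov\Om\times\ov\Om)\sm\De$ differs from the set $(\Om\times\ov\Om)\cup(\ov\Om\times\Om)$, on which convergence is already known from \cite{JV}, precisely in the points $(z,w)$ with $z,w\in\pa\Om$ and $z\neq w$. So the whole problem reduces to proving $S_{\varphi_k}(w,z)\to S_\varphi(w,z)$ locally uniformly on $\{(z,w)\in\pa\Om\times\pa\Om:z\neq w\}$. First I would record a boundary form of the Szeg\H{o}--Garabedian identity: starting from $S_\varphi(a,z)=\frac{1}{i\varphi(z)}L_\varphi(z,a)T(z)$ for $z\in\pa\Om$, $a\in\Om$, and letting $a=w_j\to w\in\pa\Om$ with $w\neq z$ along interior points, the left side tends to $S_\varphi(w,z)$ by smoothness of $S_\varphi$ on $(\ov\Om\times\ov\Om)\sm\De$, while $L_\varphi(z,\cdot)$ is smooth at $w\neq z$; hence
\[
S_\varphi(w,z)=\frac{1}{i\varphi(z)}\left(\frac{1}{2\pi(z-w)}+l_\varphi(z,w)\right)T(z),\qquad z,w\in\pa\Om,\ z\neq w,
\]
and the same identity holds for each $\varphi_k$. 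As $\varphi_k\to\varphi$ uniformly with $\varphi$ bounded away from $0$, and the singular term $\tfrac{1}{2\pi(z-w)}$ is weight-independent and bounded on compacta where $z\neq w$, the desired convergence on the double boundary follows once I show $l_{\varphi_k}\to l_\varphi$ uniformly on $\ov\Om\times\ov\Om$.

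To prove this last convergence I would run a normal-families argument. The set $(\Om\times\ov\Om)\cup(\ov\Om\times\Om)$ is dense in $\ov\Om\times\ov\Om$ (its complement $\pa\Om\times\pa\Om$ has empty interior), and on it $l_{\varphi_k}\to l_\varphi$ locally uniformly by \cite{JV}. Hence it suffices to establish a \emph{uniform} bound $\sup_k\norm{l_{\varphi_k}}_{C^1(\ov\Om\times\ov\Om)}<\infty$ (any uniform H\"older or $C^1$ bound would do): Arzel\`a--Ascoli then makes $\{l_{\varphi_k}\}$ precompact in $C^0(\ov\Om\times\ov\Om)$, every subsequential limit agrees with the continuous function $l_\varphi\in C^\infty(\ov\Om\times\ov\Om)$ on the dense set and therefore equals it, and so the full sequence converges uniformly to $l_\varphi$ on $\ov\Om\times\ov\Om$.

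The heart of the matter, and the step I expect to be the main obstacle, is this uniform-in-$k$ regularity bound; I would obtain it by revisiting the mechanism that puts $l_\varphi$ in $C^\infty(\ov\Om\times\ov\Om)$ (Theorem $3.3$ of \cite{JV}) and checking that its estimates are uniform over weights close to $\varphi$. Concretely, through the weighted Kerzman--Stein formula $P_{\varphi_k}(I+\mathcal A_{\varphi_k})=\mathcal C$: the operator $\mathcal A_{\varphi_k}=\mathcal C-\mathcal C^*_{\varphi_k}$ is skew-Hermitian for $\langle\cdot,\cdot\rangle_{\varphi_k}$ and is a smoothing perturbation of the identity, so $I+\mathcal A_{\varphi_k}$ is injective and hence invertible; its kernel depends continuously on the weight, so $\mathcal A_{\varphi_k}\to\mathcal A_\varphi$ with uniform smoothing bounds, and a Neumann-series perturbation of the invertible $I+\mathcal A_\varphi$ gives inverses $(I+\mathcal A_{\varphi_k})^{-1}$ uniformly bounded on each $H^s(\pa\Om)$ (the weighted norms being uniformly equivalent to the unweighted one since $\varphi_k\to\varphi$ with $\varphi>0$). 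Feeding this into $S_{\varphi_k}(\cdot,a)=\mathcal C(I+\mathcal A_{\varphi_k})^{-1}(\varphi_k^{-1}C_a)$ controls $S_{\varphi_k}$, and hence $l_{\varphi_k}$, uniformly up to the boundary in the first variable; the symmetry $l_{\varphi_k}(z,w)=-l_{1/\varphi_k}(w,z)$ coming from $L_\varphi(z,w)=-L_{1/\varphi}(w,z)$ supplies the same in the second variable, and joint holomorphicity on $\Om\times\Om$ lets these be assembled into the required uniform bound on $\ov\Om\times\ov\Om$. The delicate point is making each smoothing estimate genuinely uniform in $k$ and organizing them into a single two-variable bound up to the boundary; granting that, the theorem follows.
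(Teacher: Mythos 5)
There is a genuine gap, and it occurs at the very first step, before any of the hard analysis. Your reduction asserts that since $(\ov\Om\times\ov\Om)\sm\De$ differs from $(\Om\times\ov\Om)\cup(\ov\Om\times\Om)$ only in the boundary--boundary pairs, it suffices to prove convergence locally uniformly on $\{(z,w)\in\pa\Om\times\pa\Om : z\neq w\}$. This is false: locally uniform convergence at a point $(z_0,w_0)$ with $z_0,w_0\in\pa\Om$, $z_0\neq w_0$, requires uniform convergence on a full relative neighborhood $\ti U\times\ti V\subset\ov\Om\times\ov\Om$, and such a neighborhood contains pairs $(z,w)$ with \emph{both} coordinates in $\Om$ but arbitrarily close to $\pa\Om$. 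These ``collar--collar'' pairs escape every compact subset of the mixed set, so the locally uniform convergence of \cite{JV} provides no uniform rate for them; and your boundary identity $S_\varphi(w,z)=\frac{1}{i\varphi(z)}\bigl(\frac{1}{2\pi(z-w)}+l_\varphi(z,w)\bigr)T(z)$ requires one variable to lie literally on $\pa\Om$, so it says nothing about them either. Consequently, even granting your main technical claim in full ($l_{\varphi_k}\to l_\varphi$ uniformly on $\ov\Om\times\ov\Om$ with uniform $C^1$ bounds), your argument only yields uniform convergence of $S_{\varphi_k}$ on sets where one variable is on $\pa\Om$ or confined to a fixed interior compact; the collar--collar pairs remain uncontrolled. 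They cannot be recovered by a one-variable maximum principle, since any cut separating a collar point from the rest of $\ov\Om$ necessarily passes through the collar again. One could try to bridge this by writing $S_{\varphi_k}(w,a)$ as a Cauchy integral of its boundary values, $S_{\varphi_k}(w,a)=\frac{1}{2\pi}\int_{\pa\Om}\frac{\overline{L_{\varphi_k}(\zeta,a)}}{\varphi_k(\zeta)(\zeta-w)}\,ds$, and invoking Privalov-type estimates for Cauchy integrals of H\"older-small densities, but no such argument (nor any recognition of the need for one) appears in the proposal. A secondary weakness is that the claimed uniform-in-$k$ Kerzman--Stein estimates --- which you correctly identify as the heart of your route --- are only sketched; they amount to redoing Theorem 3.3 of \cite{JV} with uniformity in the weight, which is substantial unverified work.

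For contrast, the paper avoids both difficulties by a decoupling device rather than by boundary identities plus equicontinuity. It invokes Bell's formula
\[
S_{\varphi_k}(z,w) \;=\; \frac{1}{1-f(z)\overline{f(w)}} \sum_{i,j,n,m} c_{ijnmk}\, S_{\varphi_k}^{(\bar{n})}(z,a_i)\, \overline{S_{\varphi_k}^{(\bar{m})}(w,a_j)},
\]
where $f:\Om\ra\mathbb D$ is a proper holomorphic map with interior zeros $a_i$, chosen via Bell--Kaleem so that $1-f(z)\overline{f(w)}$ is bounded away from zero on $\ti U\times\ti V$ even when $z_0,w_0$ lie on different boundary curves. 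Each factor in the sum pairs one variable with a \emph{fixed interior} point $a_i$, so the already-known mixed-set convergence of \cite{JV} applies uniformly on all of $\ti U$ and $\ti V$ --- collar points included --- and the constants $c_{ijnmk}$ converge because they solve linear systems whose coefficients converge. This handles entire neighborhoods $\ti U\times\ti V$ in one stroke, which is exactly what your reduction fails to do.
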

	
	\begin{proof}
		Let $z_0,w_0\in\partial\Omega$ be distinct arbitrary points. Choose relatively compact neighborhoods $U, V\subset\mathbb{C}$ of $z_0, w_0$ respectively such that they both intersect only one of the boundary components of $\Omega$ namely, the ones containing $z_0, w_0$, and satisfy $\overline{U}\cap \overline{V} = \emptyset$. Let $\Tilde{U} = U\cap\overline{\Omega}$ and $\Tilde{V} = V\cap\overline{\Omega}$. It is sufficient to show that 
		\[
		\lim_{k\rightarrow\infty}S_{\varphi_k}(z,w) = S_{\varphi}(z,w) 
		\]
		uniformly on $\Tilde{U}\times\Tilde{V}$. Let $f:\Omega\rightarrow \mathbb{D}$ be a proper holomorphic map which is an $n$-to-one branched covering and which maps each boundary curve bijectively to the unit circle. Suppose that $f$ has zeros at $a_1,\ldots,a_N \in \Omega$ with multiplicities $M(1),\ldots, M(N)$ respectively. Note that $M(1)+\cdots+M(N) = n$. For an arbitrary weight $\psi$, let $S_{\psi}^{(0)}(z, w)$ denote $S_{\psi}(z, w)$, and let $S_{\psi}^{\overline n}(z, w)$ denote $(\partial^n/\partial {\overline w}^n)S_{\psi}(z, w)$.
        By Bell \cite{Be1}, 
		\begin{equation}\label{rel_k}
			S_{\varphi_k}(z,w) = \frac{1}{1-f(z)\overline{f(w)}} \left(\sum_{i,j=1}^N \sum_{n=0}^{M(i)} \sum_{m=0}^{M(j)} c_{ijnmk}\, S_{\varphi_k}^{(\bar{n})}(z,a_i) \overline{S_{\varphi_k}^{(\bar{m})}(w,a_j)}\right) 
		\end{equation}
		where the constants $c_{ijnmk}$ are uniquely determined by the system of equations
		\begin{equation}\label{constants_k}
			\sum_{j=1}^N\sum_{m=0}^{M(j)}c_{ijnmk}\overline{S_{\varphi_k}^{(q,\bar{m})}(a_l,a_j)}
			=
			\left\{
			\begin{array}{lr}
				1 & \text{if } i=l \text{ and } n=q \\
				0 & \text{if } i\neq l \text{ or } n\neq q
			\end{array}
			\right.
		\end{equation}
		where $1\leq l\leq N$, $1\leq i\leq N$, $0\leq q\leq M(l)$ and $0\leq n\leq M(i)$.  Similarly, 
		\begin{equation}\label{rel}
			S_{\varphi}(z,w) = \frac{1}{1-f(z)\overline{f(w)}} \left(\sum_{i,j=1}^N \sum_{n=0}^{M(i)} \sum_{m=0}^{M(j)} c_{ijnm}\, S_{\varphi}^{(\bar{n})}(z,a_i) \overline{S_{\varphi}^{(\bar{m})}(w,a_j)}\right)
		\end{equation}
		where the constants $c_{ijnm}$ are uniquely determined by the system of equations
		\begin{equation}\label{constants}
			\sum_{j=1}^N\sum_{m=0}^{M(j)}c_{ijnm}\overline{S_{\varphi}^{(q,\bar{m})}(a_l,a_j)}
			=
			\left\{
			\begin{array}{lr}
				1 & \text{if } i=l \text{ and } n=q \\
				0 & \text{if } i\neq l \text{ or } n\neq q
			\end{array}
			\right.
		\end{equation}
		where $1\leq l\leq N$, $1\leq i\leq N$, $0\leq q\leq M(l)$ and $0\leq n\leq M(i)$.
		We can write the systems (\ref{constants_k}) and (\ref{constants}) of linear equations in matrix form as:
		\[
		\mathcal{A}_k\mathcal{C}_k = \mathcal{B}
		\quad {and} \quad
		\mathcal{A} \mathcal{C} = \mathcal{B}
		\]
		where $\mathcal{B}$ is an $n\times 1$ vector with entries $0$ or $1$, $\mathcal{A}_k$ is an $n\times n$ matrix with entries $0$ or $S_{\varphi_k}^{(q,\bar{m})}(a_l,a_j)$, 
		$\mathcal{A}$ is same as the matrix $\mathcal{A}_k$ if we replace the entries $S_{\varphi_k}$ by $S_{\varphi}$, and 
		$\mathcal{C}_k$ and $\mathcal{C}$ are $n\times 1$ vectors with entries $c_{ijnmk}$ and $c_{ijnm}$ respectively. The fact that $M(1) + \cdots +M(N) = n$ is being used here.
		
		\medskip
		
		Since $S_{\varphi_k}\rightarrow S_{\varphi}$ locally uniformly (and so its derivatives) on $\Omega\times\Omega$ as $k\rightarrow\infty$, we have that $\mathcal{A}_k^{-1}\rightarrow\mathcal{A}^{-1}$ uniformly as $k\rightarrow\infty$. Therefore, $c_{ijnmk}\rightarrow c_{ijnm}$ uniformly as $k\rightarrow\infty$.

        \medskip
		
		If $z_0$ and $w_0$ lie on the same boundary curve then $1-f(z)\overline{f(w)}$ does not vanish on $\Tilde{U}\times\Tilde{V}$ because $f$ is bijective on each boundary component of $\partial \Omega$. 

        \medskip
        
        When $z_0,w_0$ lie on different boundary curves say, $\gamma_1, \gamma_2$ respectively,  choose $z_1\in \gamma_1$ with $z_1\neq z_0$. By Bell--Kaleem \cite{BeKf}, there exists a proper holomorphic map $f: \Omega \rightarrow \mathbb D$ which maps both $z_1$ and $w_0$ to $1 \in \partial \mathbb D$ and is bijective on each boundary component of $\partial \Omega$. We can then shrink $U$, if necessary, to make sure that $1-f(z)\overline{f(w)}$ does not vanish on $\Tilde{U}\times\Tilde{V}$.

		\medskip
		
		Since the zeros $a_i$ lie in $\Omega$, \cite{JV} implies that
		\[
		\lim_{k\rightarrow\infty}S_{\varphi_k}^{(\bar{n})}(z,a_i)= S_{\varphi}^{(\bar{n})}(z,a_i)
		\quad
		\text{and}
		\quad
		\lim_{k\rightarrow\infty}S_{\varphi_k}^{(\bar{m})}(w,a_j)= S_{\varphi}^{(\bar{m})}(w,a_j)
		\]
		uniformly for $z\in \tilde{U}$ and $w\in\tilde{V}$ respectively. These observations along with (\ref{rel_k}) and (\ref{rel}) show that 
		\[
		\lim_{k\rightarrow\infty}S_{\varphi_k}(z,w) = S_{\varphi}(z,w)
		\]
		uniformly on $\Tilde{U}\times \Tilde{V}$. The convergence of the derivatives also follows similarly.  
	\end{proof} 
	
	\begin{thm}
		Let $\Omega\subset\mathbb{C}$ be a $C^{\infty}$-smoothly bounded $n$-connected domain and $\varphi$ a positive real-valued $C^{\infty}$ smooth function on $\partial\Omega$. 
		Let $\{\varphi_k\}_{k=1}^{\infty}$ be a sequence of positive real-valued $C^{\infty}$ functions on $\partial\Omega$ such that $\varphi_k\rightarrow\varphi$ in $C^{\infty}$ topology on $\partial\Omega$ as $k\rightarrow\infty$. Then
		\begin{equation}
		\lim_{k\rightarrow\infty}l_{\varphi_k}(z,w) = l_{\varphi}(z,w)
		\end{equation}
		locally uniformly on $(\overline{\Om}\times\overline{\Om})\setminus\Delta$.
	\end{thm}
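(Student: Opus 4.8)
The plan is to reduce the weighted Garabedian kernel to the weighted Szeg\H{o} kernel on the boundary and then invoke Theorem \ref{Szego}, afterwards upgrading the resulting pointwise convergence to local uniform convergence by an Arzel\`a--Ascoli argument. The starting point is the boundary identity $S_{\varphi}(a,z)=\tfrac{1}{i\varphi(z)}L_{\varphi}(z,a)T(z)$ (for $z\in\partial\Omega$, $a\in\Omega$), which, since $\overline{T(z)}=1/T(z)$, rearranges to
\[
L_{\varphi}(z,w)=i\,\varphi(z)\,\overline{T(z)}\,S_{\varphi}(w,z),\qquad z\in\partial\Omega,\ w\in\Omega.
\]
Both sides extend smoothly in $w$ up to $\partial\Omega$ off $z$, so the identity persists for all $w\in\overline\Omega$ with $w\neq z$. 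As $\varphi_k\to\varphi$ uniformly and, by Theorem \ref{Szego}, $S_{\varphi_k}\to S_{\varphi}$ locally uniformly on $(\overline\Omega\times\overline\Omega)\setminus\Delta$, the identity gives $L_{\varphi_k}\to L_{\varphi}$ uniformly on compact subsets of $\{(z,w)\in\partial\Omega\times\overline\Omega:\ w\neq z\}$; the symmetric identity $L_{\varphi}(z,w)=-L_{1/\varphi}(w,z)$ together with $1/\varphi_k\to1/\varphi$ in $C^\infty$ yields the same on $\{(z,w)\in\overline\Omega\times\partial\Omega:\ z\neq w\}$. On these sets $1/(2\pi(z-w))$ is $\varphi$-free and smooth, so $l_{\varphi_k}=L_{\varphi_k}-1/(2\pi(z-w))$ converges there too.

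Combining this with the convergence $l_{\varphi_k}\to l_{\varphi}$ on $(\Omega\times\overline\Omega)\cup(\overline\Omega\times\Omega)$ already furnished by \cite{JV}, one obtains pointwise convergence of $l_{\varphi_k}$ to $l_{\varphi}$ at every point of $(\overline\Omega\times\overline\Omega)\setminus\Delta$: if at least one variable is interior the claim is \cite{JV} (local uniformity there covers even interior points close to $\partial\Omega$), while if both variables lie on $\partial\Omega$, necessarily distinct since the point avoids $\Delta$, it is the identity above. It therefore remains only to promote this pointwise statement to uniform convergence on each compact $K\subset(\overline\Omega\times\overline\Omega)\setminus\Delta$, which by the Arzel\`a--Ascoli theorem follows once the family $\{l_{\varphi_k}\}$ is shown to be equicontinuous on $K$ (the pointwise limit then being forced to equal $l_{\varphi}$).

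The main obstacle lies exactly here, in the regime where both variables are interior but tend to $\partial\Omega$ at distinct points: the identity requires a variable lying on $\partial\Omega$, while the convergence from \cite{JV} is only locally uniform on $(\Omega\times\overline\Omega)\cup(\overline\Omega\times\Omega)$ and may degrade as both variables approach the boundary, so neither tool delivers equicontinuity up to $\partial\Omega$ by itself. I would close the gap by establishing uniform-in-$k$ bounds, say $\sup_k\Vert l_{\varphi_k}\Vert_{C^1(K)}<\infty$ for each such $K$. Because $\varphi_k\to\varphi$ in $C^\infty$, the family $\{\varphi_k\}$ is bounded in every $C^m(\partial\Omega)$ and uniformly bounded below, and I expect the $C^\infty(\overline\Omega\times\overline\Omega)$ regularity estimates of \cite{JV} (their Theorem $3.3$) to hold uniformly along such a sequence; verifying that these boundary kernel estimates depend on the weight only through its $C^m$ norms and positive lower bound is the crux. (A localized maximum principle, equivalently a two-constants estimate propagating the boundary decay inward, is an alternative route to the same end and requires only a uniform $C^0$ bound.) Granting the resulting equicontinuity, Arzel\`a--Ascoli upgrades the pointwise convergence to local uniform convergence on $(\overline\Omega\times\overline\Omega)\setminus\Delta$, as required.
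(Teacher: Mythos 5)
Your reduction steps are sound: the boundary identity $L_{\varphi}(z,w)=i\,\varphi(z)\,\overline{T(z)}\,S_{\varphi}(w,z)$ together with Theorem \ref{Szego} does give uniform convergence of $l_{\varphi_k}$ on compact subsets of $\{(z,w):z\in\partial\Omega,\ w\in\overline\Omega,\ z\neq w\}$, the relation $L_{\varphi}(z,w)=-L_{1/\varphi}(w,z)$ handles the symmetric set, and together with \cite{JV} this yields pointwise convergence on all of $(\overline{\Omega}\times\overline{\Omega})\setminus\Delta$. But these boundary sets are thin (they have empty interior relative to $\overline{\Omega}\times\overline{\Omega}$), so, as you yourself recognize, everything hinges on equicontinuity of $\{l_{\varphi_k}\}$ near $\partial\Omega\times\partial\Omega$, and that is exactly the step your proposal does not prove. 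The bound $\sup_k\Vert l_{\varphi_k}\Vert_{C^1(K)}<\infty$ is not a corollary of Theorem 3.3 of \cite{JV} as stated: that theorem asserts smoothness of $l_{\varphi}$ for a single fixed weight, and extracting constants that depend on the weight only through finitely many $C^m$ norms and a positive lower bound would require re-running the entire Kerzman--Stein regularity argument with uniform control of the operators $\mathcal{A}_{\varphi_k}$ --- in effect proving a new quantitative theorem, not citing an existing one. The same objection applies to your parenthetical two-constants alternative: it still needs a uniform $C^0$ bound on $l_{\varphi_k}$ in the region near the boundary diagonal, where no bound or convergence is currently available. So the proposal is a reasonable strategy whose central analytic step is missing.

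The paper closes precisely this gap without any new estimates, by a structural argument you could adopt instead. Near a boundary pair $(z_0,w_0)$ with $z_0\neq w_0$, it invokes Bell's decomposition of $S_{\varphi_k}$ with respect to a proper holomorphic map $f:\Omega\rightarrow\mathbb{D}$ (the same formula already used in the proof of Theorem \ref{Szego}, with $f$ chosen via Bell--Kaleem so that $f(z)-f(w)$ does not vanish on $\tilde{U}\times\tilde{V}$), and converts it through the boundary identity into
\[
L_{\varphi_k}(w,z)=\frac{f(w)}{f(w)-f(z)}\sum_{i,j,n,m}c_{ijnmk}\,S_{\varphi_k}^{(\bar n)}(z,a_i)\,L_{\varphi_k}^{(m)}(w,a_j),
\]
first for $w\in\partial\Omega$ and then, by the identity principle, for all $(z,w)\in\tilde{U}\times\tilde{V}$, including pairs of interior points approaching distinct boundary points. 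This expresses the kernel near $\partial\Omega\times\partial\Omega$ entirely in terms of quantities with one argument pinned at the fixed interior points $a_i,a_j$ --- where \cite{JV} already gives uniform convergence up to the boundary in the remaining variable --- and constants $c_{ijnmk}\rightarrow c_{ijnm}$ (established in the proof of Theorem \ref{Szego}). Uniform convergence on $\tilde{U}\times\tilde{V}$ then follows directly, with no equicontinuity or uniform-in-$k$ estimate ever needed.
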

	
\begin{proof}
	As before, let $z_0,w_0\in\partial\Omega$ be distinct arbitrary points. Choose relatively compact neighborhoods $U, V\subset\mathbb{C}$ of $z_0, w_0$ respectively such that they both intersect only one of the boundary components of $\Omega$ namely, the ones containing $z_0, w_0$, and satisfy $\overline{U}\cap \overline{V} = \emptyset$. Let $\Tilde{U} = U\cap\overline{\Omega}$ and $\Tilde{V} = V\cap\overline{\Omega}$. It is sufficient to show that 
		\[
		\lim_{k\rightarrow\infty}l_{\varphi_k}(z,w) = l_{\varphi}(z,w) 
		\]
		uniformly on $\Tilde{U}\times\Tilde{V}$. Let $f:\Omega\rightarrow \mathbb{D}$ be a proper holomorphic map which is an $n$-to-one branched covering and which maps each boundary curve bijectively to the unit circle. Suppose that $f$ has zeros $a_1,\ldots,a_N$ with multiplicities $M(1),\ldots, M(N)$ respectively. 
        \medskip
        
        If $z_0$ and $w_0$ lie on the same boundary component of $\partial \Omega$, then $f(z)-f(w)$ does not vanish on $\Tilde{U}\times\Tilde{V}$ because $f$ is bijective on each boundary component of $\partial \Omega$. 

        \medskip
        
        When $z_0,w_0$ lie on different boundary curves, say, $\gamma_1, \gamma_2$ respectively, choose $z_1\in \gamma_1$ with $z_1\neq z_0$. By Bell--Kaleem \cite{BeKf}, there exists a proper holomorphic map $f: \Omega \ra \mathbb D$ which maps both $z_1$ and $w_0$ to $1 \in \partial \mathbb D$. We can then shrink $U$, if necessary, to make sure that $f(z)-f(w)$ does not vanish on $\Tilde{U}\times\Tilde{V}$. We also make sure that none of the points $a_i$ lie in $\Tilde{V}$. Then, for $(z,w)\in \Tilde{U}\times \Tilde{V}$, 
		\begin{equation}\label{rel2_k}
			S_{\varphi_k}(z,w) = \frac{1}{1-f(z)\overline{f(w)}} \left(\sum_{i,j=1}^N \sum_{n=0}^{M(i)} \sum_{m=0}^{M(j)} c_{ijnmk}\, S_{\varphi_k}^{(\bar{n})}(z,a_i) \overline{S_{\varphi_k}^{(\bar{m})}(w,a_j)}\right)
		\end{equation}
        where the constants $c_{ijnmk}$ are determined by (\ref{constants_k}). Similarly, for $(z,w)\in \Tilde{U}\times \Tilde{V}$, 
		\begin{equation}\label{rel2}
			S_{\varphi}(z,w) = \frac{1}{1-f(z)\overline{f(w)}} \left(\sum_{i,j=1}^N \sum_{n=0}^{M(i)} \sum_{m=0}^{M(j)} c_{ijnm}\, S_{\varphi}^{(\bar{n})}(z,a_i) \overline{S_{\varphi}^{(\bar{m})}(w,a_j)}\right)
		\end{equation}
        where the constants $c_{ijnm}$ are determined by (\ref{constants}).
        We have seen in the proof of Theorem \ref{Szego} that $c_{ijnmk}\rightarrow c_{ijnm}$ as $k\rightarrow\infty$. We also know that
	\[
	\overline{S_{\varphi_k}(z,w)} = \frac{1}{i\varphi_k(z)}L_{\varphi_k}(z,w)T(z),
	\quad\,\,
	w\in\Om, \,z\in\partial\Omega.
	\]
For a positive integer $r$, upon differentiation with respect to $w$, we obtain
\[
\overline{S_{\varphi_k}^{(\bar{r})}(z,w)} = \frac{1}{i\varphi_k(z)}L_{\varphi_k}^{(r)}(z,w)T(z)
\]
for $w\in\Om$ and $z\in\partial\Omega$. Now (\ref{rel2_k}) can therefore be rewritten for $w\in\partial\Om$, as
\[
\frac{1}{i\varphi_k(w)}L_{\varphi_k}(w,z)T(w) 
=
\frac{1}{1-f(z)\overline{f(w)}} \sum_{i,j=1}^N \sum_{n=0}^{M(i)} \sum_{m=0}^{M(j)} c_{ijnmk}\, S_{\varphi_k}^{(\bar{n})}(z,a_i)
\frac{1}{i\varphi_k(w)} L_{\varphi_k}^{(m)}(w,a_j) T(w).
\]
Since $f(w) \overline{f(w)} = 1$, this can be written as,
\begin{equation}
L_{\varphi_k}(w,z) 
=
\frac{f(w)}{f(w)-f(z)} \sum_{i,j=1}^N \sum_{n=0}^{M(i)} \sum_{m=0}^{M(j)} c_{ijnmk}\, S_{\varphi_k}^{(\bar{n})}(z,a_i)
L_{\varphi_k}^{(m)}(w,a_j), 
\end{equation}
and this holds by the identity principle for all $(z,w)\in \Tilde{U}\times\Tilde{V}$. Similarly,
\begin{equation}
L_{\varphi}(w,z) 
=
\frac{f(w)}{f(w)-f(z)} \sum_{i,j=1}^N \sum_{n=0}^{M(i)} \sum_{m=0}^{M(j)} c_{ijnm}\, S_{\varphi}^{(\bar{n})}(z,a_i)
L_{\varphi}^{(m)}(w,a_j).
\end{equation}
This gives 
\begin{multline*}
l_{\varphi_k}(w,z) - l_{\varphi}(w,z) 
=
L_{\varphi_k}(w,z) - L_{\varphi}(w,z) 
\\= \frac{f(w)}{f(w)-f(z)}
 \sum_{i,j=1}^N \sum_{n=0}^{M(i)} \sum_{m=0}^{M(j)} 
 c_{ijnmk}\, S_{\varphi_k}^{(\bar{n})}(z,a_i)
L_{\varphi_k}^{(0,m)}(w,a_j)
-
c_{ijnm}\, S_{\varphi}^{(\bar{n})}(z,a_i)
L_{\varphi}^{(0,m)}(w,a_j).
\end{multline*}
for $(z,w)\in \Tilde{U}\times\Tilde{V}$
Note that $L_{\varphi_k}(w,z) - L_{\varphi}(w,z)$ is a holomorphic function in $z$ and $w$ and extends to be in $C^{\infty}(\overline{\Omega}\times \overline{\Omega})$. The function on the right hand side is also a holomorphic function for $(z,w)\in\Tilde{U}\times\Tilde{V}$ as none of the points $a_i$ lie in $\Tilde{V}$. 
By \cite{JV}, 
\[
    \lim_{k\rightarrow\infty}S_{\varphi_k}^{(\bar{n})}(z,a_i)= S_{\varphi}^{(\bar{n})}(z,a_i)
	\quad
	\text{and}
	\quad
	\lim_{k\rightarrow\infty}L_{\varphi_k}^{(0,m)}(w,a_j)= L_{\varphi}^{(0,m)}(w,a_j)
\]
uniformly for $z \in \Tilde{U}$ and $w \in \Tilde{V}$. Since the constants $c_{ijnmk}$ converge to $c_{ijnm}$ as $k\rightarrow\infty$, it follows that 
\[
		\lim_{k\rightarrow\infty}l_{\varphi_k}(z,w) = l_{\varphi}(z,w)
		\]
		locally uniformly on $(\overline{\Om}\times\overline{\Om})\setminus\Delta$. Note that the convergence of derivatives also follows similarly. 
\end{proof}


	\section{zeros of the weighted Szeg\H{o} kernel}
	
\noindent In the unweighted case, the following behaviour of the zeros of the Szeg\H{o} kernel is known and an exposition can be found in Bell \cite{Be0}.
	
	\begin{thm}[Bell]
		Let $\Omega\subset\mathbb{C}$ be a bounded $n$-connected domain with $C^{\infty}$ smooth boundary curves $\gamma_i$, $1 \le i \le n$. Let $w_j$ be a sequence in $\Omega$ that tends to a point $a \in \gamma_m$. As $w_j$ tends to $a$, the zeros $Z^i(w_j)$ of $S(z,w_j)$ become simple zeros, and it is possible to order them so that for each $i$, $i\neq m$, there is a point $Z^i(a)\in\gamma_i$ such that $Z^i(w_j)$ tends to $Z^i(a)$. 
		
		\medskip
		
		$S(z,a)$ is non-vanishing on $\Om$ as a function of $z$ and has exactly $n-1$ zeros $Z^i(a)$ on $\partial\Om$, one on each boundary component not containing $a$. Furthermore, the zeros are simple in the sense that $S'(Z^i(a),a)\neq 0$ for each zero $Z^i(a)$. 
	\end{thm}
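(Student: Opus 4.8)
The plan is to realise the zeros of $S(\cdot,w)$, for $w\in\Omega$, as the ``free'' zeros of the Ahlfors map and then to track them by harmonic measure as $w$ runs out to the boundary. Recall that $f_w=S(\cdot,w)/L(\cdot,w)$ is the Ahlfors map: a proper holomorphic map $\Omega\to\mathbb D$ of degree $n$ with $|f_w|=1$ on $\partial\Omega$, carrying each $\gamma_i$ diffeomorphically onto the unit circle. Since $L(\cdot,w)$ is zero-free on $\Omega$ with a single simple pole at $w$, the $n$ zeros of $f_w$ in $\Omega$ are $w$ itself together with the $n-1$ zeros $Z^1(w),\dots,Z^{n-1}(w)$ of $S(\cdot,w)$; in particular $S(\cdot,w)$ has exactly $n-1$ zeros in $\Omega$. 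Writing $p_1(w)=w,\ p_{k+1}(w)=Z^{k}(w)$ for the full zero set, two classical facts will drive the argument: first, $\log|f_w(z)|=-\sum_{k=1}^n g_\Omega(z,p_k(w))$, where $g_\Omega$ is the Green's function of $\Omega$; and second, the single-valuedness of $f_w=\exp\!\big(-\!\sum_k(g_\Omega(\cdot,p_k)+i\,{}^{*}g_\Omega(\cdot,p_k))\big)$ forces the harmonic-measure balance
\[
\sum_{k=1}^n \omega_j\big(p_k(w)\big)=1,\qquad j=1,\dots,n,
\]
where $\omega_j$ is the harmonic measure of $\gamma_j$ (so $\omega_j$ is harmonic on $\Omega$, equals $1$ on $\gamma_j$, $0$ on the other components, and $\sum_j\omega_j\equiv 1$).

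First I would use these balance relations to locate the zeros. Isolating the contribution of $w=p_1$ gives $\sum_{i=1}^{n-1}\omega_j(Z^i(w))=1-\omega_j(w)$. As $w\to a\in\gamma_m$ the harmonic measures satisfy $\omega_j(w)\to\delta_{jm}$ by continuity up to the boundary, so the right-hand side tends to $1-\delta_{jm}$. Taking $j=m$ gives $\sum_i\omega_m(Z^i(w))\to 0$; as each summand is nonnegative, $\omega_m(Z^i(w))\to 0$ for every $i$, and because $\omega_m$ vanishes on $\overline\Omega$ exactly on $\partial\Omega\setminus\gamma_m$ while being strictly positive on $\Omega$, this forces every $Z^i(w)$ to approach $\partial\Omega\setminus\gamma_m$. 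Thus all $n-1$ free zeros escape to the boundary, onto components other than $\gamma_m$. Knowing $Z^i(w)\to\gamma_{j(i)}$ for some $j(i)\ne m$, we have $\omega_j(Z^i(w))\to\delta_{j,j(i)}$, whence for each $j\ne m$ the relation $\sum_i\omega_j(Z^i(w))\to 1$ says exactly one index $i$ has $j(i)=j$. This is precisely the assertion that the escaping zeros land one on each boundary component different from $\gamma_m$; relabelling, $Z^i(w)\to Z^i(a)\in\gamma_i$ for $i\ne m$.

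Next I would extract the statements about $S(\cdot,a)$ itself. Because every $Z^i(w_j)$ leaves each compact subset of $\Omega$, the convergence $S(\cdot,w_j)\to S(\cdot,a)$ (locally uniform on $\overline\Omega\setminus\{a\}$, from $S\in C^{\infty}(\overline\Omega\times\overline\Omega\setminus\Delta)$) together with Hurwitz's theorem shows $S(\cdot,a)$ has no zero in $\Omega$. Moreover all $n$ zeros of $f_{w_j}$ now approach $\partial\Omega$, so $g_\Omega(z,p_k(w_j))\to 0$ for fixed $z$ and hence $|f_{w_j}(z)|\to 1$; by the maximum principle the locally uniform limit of $f_{w_j}$ is a unimodular constant $\lambda$, giving $S(z,a)=\lambda\,L(z,a)$ on $\overline\Omega\setminus\{a\}$. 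Each limit point $Z^i(a)\in\gamma_i$ is a boundary zero of $S(\cdot,a)$, since $(Z^i(a),a)\notin\Delta$ and $S(Z^i(w_j),w_j)=0$. To see these are the only boundary zeros, I would combine the boundary identity $S(z,a)=i\,\overline{L(z,a)\,T(z)}$ with $S=\lambda L$ to obtain $e^{2i\arg S(z,a)}=i\lambda\,\overline{T(z)}$ on $\partial\Omega$; thus $S(\cdot,a)$ divided by a continuous square root of $i\lambda\overline T$ is real on each $\gamma_i$. Since $\arg T$ changes by $2\pi$ times the rotation index of $\gamma_i$, this real function is anti-periodic around each component, forcing an odd number of sign changes on every $\gamma_i$ with $i\ne m$ and, because the simple pole of $L(\cdot,a)$ at $a$ contributes one further sign change on $\gamma_m$, an even number on $\gamma_m$. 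As the $n-1$ escaping zeros are conserved in the limit and distribute one per component $\gamma_i$, $i\ne m$, the parity count pins down exactly one zero on each such $\gamma_i$ and none on $\gamma_m$ or in $\Omega$.

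Finally, simplicity of $Z^i(a)$ and of the approximating zeros follows from the one-per-component count: only a single $Z^i(w_j)$ approaches $Z^i(a)$, so a limiting zero of order $\ge 2$ would violate the argument-principle balance across a collar of $\gamma_i$, and Hurwitz then makes $Z^i(w_j)$ a simple zero of $S(\cdot,w_j)$ for large $j$. I expect the main obstacle to be the bookkeeping at the boundary: justifying the harmonic-measure balance $\sum_k\omega_j(p_k)=1$, and, above all, transferring the conserved count of the $n-1$ interior zeros through the limit to a clean count of \emph{boundary} zeros, handling the simple pole of $L(\cdot,a)$ sitting on $\gamma_m$ and the zeros sitting on the contour of the argument principle. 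These are exactly the points where the anti-periodicity/parity device and the smooth extension of the kernels off the diagonal must be used with care.
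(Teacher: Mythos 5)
The paper itself does not prove this theorem (it is quoted from Bell \cite{Be0}), so your attempt must be judged against the standard argument, whose engine is the meromorphic extension of $S$ and $L$ past $\partial\Omega$ (equivalently, gluing across the boundary into the double) --- the same machinery this paper deploys in Section 3 for its weighted analogues. The first half of your proposal is correct and is a genuinely different, attractive route: single-valuedness of $f_w=\exp\bigl(-\sum_k(g_\Omega(\cdot,p_k)+i\,{}^{*}g_\Omega(\cdot,p_k))\bigr)$ forces $\sum_k\omega_j(p_k(w))\in\mathbb{Z}$, and since each term lies in $(0,1)$ and $\sum_j\omega_j\equiv 1$, summing over $j$ gives $n$ positive integers adding to $n$, hence each equals $1$ (you should spell out this integrality step, which you only gesture at). From this the escape of the $n-1$ zeros, with multiplicity, one to each component $\gamma_i$ with $i\neq m$, and the eventual distinctness and simplicity of the $Z^i(w_j)$, all follow; Hurwitz then rules out zeros of $S(\cdot,a)$ in $\Omega$, and $S(\cdot,a)=\lambda L(\cdot,a)$ is fine (it also follows at once from hermitian symmetry plus the boundary identity, with $\lambda=-iT(a)$).

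The genuine gap is in the second half: the \emph{exact} count of boundary zeros, their simplicity, and the convergence $Z^i(w_j)\to Z^i(a)$ to a single point. Your anti-periodicity/parity device only yields a lower bound --- at least one odd-order zero on each $\gamma_i$, $i\neq m$ --- and cannot exclude extra zeros (even-order zeros produce no sign change, and additional pairs of sign changes are consistent with parity), nor give simplicity. To close this you invoke ``conservation of the $n-1$ escaping zeros in the limit'' and an ``argument-principle balance across a collar of $\gamma_i$,'' but neither is valid as stated: Hurwitz controls zeros of a locally uniform limit only on open subsets of $\Omega$, so a boundary zero of $S(\cdot,a)$ need not be a limit of zeros of $S(\cdot,w_j)$, nor is coalescence excluded; and the argument principle on a collar presupposes that $S(\cdot,a)$ is zero-free on the collar's boundary, which includes $\gamma_i$ itself --- exactly what is at issue. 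The missing ingredient is Bell's: either transfer conformally to real-analytic boundary and apply the generalized argument principle (boundary zeros and poles counted with weight $1/2$) to $h=S(\cdot,a)L(\cdot,a)=-iT(a)L(\cdot,a)^2$, which gives $2N_{\Omega}+N_{\partial\Omega}-1=\frac{1}{2\pi}\Delta\arg h=n-2$ and hence $N_{\partial\Omega}=n-1$ exactly once $N_{\Omega}=0$; or glue $S(\cdot,w)$ with $\overline{L(R(\cdot),w)}$ across $\partial\Omega$ into a function holomorphic on neighborhoods of each $\gamma_i$ in the double and do residue/Hurwitz counting there (this is precisely the $G_k$ construction the paper uses in its weighted theorem). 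With that in hand, uniqueness and simplicity of the zero on each $\gamma_i$ follow, and point-convergence of $Z^i(w_j)$ is then immediate because every subsequential limit must be that unique zero; without it, your proof does not go through.
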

	
	\begin{rem}
		It is known that $L(z, w)$ does not vanish for $z\in \overline{\Om}, w\in\Om$. From above theorem, given $a\in\partial\Om$, $L(z,a)$ is non-vanishing for $z\in\Om$ and has exactly $n-1$ zeros on $\partial\Om$, which are same as the zeros of $S(z,a)$.
	\end{rem}
	
	We shall now study the behaviour of zeros of the weighted Szeg\H{o} kernel and the weighted Garabedian kernel as weights vary.
	
	\medskip
	
	Let $\Omega\subset\mathbb{C}$ be a bounded $n$-connected domain whose boundary curves are {\it real analytic} with no isolated boundary point. If $\varphi$ is a positive real analytic function on $\partial\Omega$, then the weighted Szeg\H{o} kernel $S_{\varphi}(z,w)$ extends to a neighborhood of $(\overline{\Om}\times \overline{\Om})\setminus\Delta$ as a function that is holomorphic in $z$ and antiholomorphic in $w$, and the weighted Garabedian kernel is given by
	\[
	L_{\varphi}(z,w) = \frac{1}{2\pi(z-w)}+ l_{\varphi}(z,w)
	\]
	where $l(z,w)$ extends to be holomorphic in $z$ and $w$ on a neighborhood of ($\overline{\Om}\times\ov{\Om})$. Fix $w\in\Om$. Let $z_1,\ldots,z_N$ be the zeros of $S_{\varphi}(\cdot,w)$ in $\Om$ and $b_1,\ldots,b_M$ be the zeros of $S_{\varphi}(\cdot, w)$ on $\partial\Om$. We know that
	\begin{equation}\label{Bdy}
		\overline{S_{\varphi}(z,w)} = \frac{1}{i\varphi(z)}L_{\varphi}(z,w)T(z),
		\quad\,\,
		w\in\Om, z\in\partial\Omega.
	\end{equation}
	Therefore, the zeros of $L_{\varphi}(\cdot, w)$ on $\partial\Om$ are $b_1,\ldots, b_M$. Let $p_1,\ldots, p_Q$ be the zeros of $L_{\varphi}(\cdot,w)$ in $\Om$. We shall apply the generalized argument principle on $h = S_{\varphi}(\cdot,w)L_{\varphi}(\cdot,w)$, which is a meromorphic function in a neighborhood of $\overline{\Om}$. Let $m_h(z)$ denote the multiplicity of a zero or pole of $h$ at $z$. We obtain
	\[
	\sum_{i=1}^N m_h(z_i) + \sum_{i=1}^Q m_h(p_i) - m_h(w) + \frac{1}{2}\sum_{i=1}^M m_h(b_i)
	=
	\frac{1}{2\pi} \Delta \arg h.
	\]
	Let $m_S(z)$ and $m_L(z)$ denote the the multiplicity of a zero or a pole of $S_{\varphi}(\cdot,w)$ and $L_{\varphi}(\cdot, w)$ respectively at $z$. Then,
	\[
	\sum_{i=1}^N m_S(z_i) + \sum_{i=1}^Q m_L(p_i) - m_L(w) + \sum_{i=1}^M m_S(b_i) 
	=
	\frac{1}{2\pi} \Delta \arg S_{\varphi}(\cdot,w)L_{\varphi}(\cdot,w).
	\]
	Multiplying $S_{\varphi}(z,w)$ on both sides of (\ref{Bdy}), we obtain
	\[
	\frac{1}{i\varphi(z)}S_{\varphi}(z, w) L_{\varphi}(z,w) T(z)= \vert S_{\varphi}(z,w)\vert^2.
	\]
	Therefore, 
	\[
	\Delta \arg S_{\varphi}(\cdot,w)L_{\varphi}(\cdot,w) = - \Delta \arg T = 2\pi(n-2).
	\]
	Since $m_L(w) = 1$, we have
	\begin{equation}
	\sum_{i=1}^N m_S(z_i) + \sum_{i=1}^Q m_L(p_i) + \sum_{i=1}^M m_S(b_i) 
	=
	n-1.
	\end{equation}
	\begin{obs}
		Thus, for $w\in\Omega$, the combined number of zeros of $S_{\varphi}(\cdot, w)$ and $L_{\varphi}(\cdot, w)$ in $\Om$, plus the number of zeros of $S_{\varphi}(\cdot,w)$ on $\partial\Om$ (which are also the zeros of $L_{\varphi}(\cdot,w)$ on $\partial\Om$), equals $n-1$. Furthermore, the zeros are counted with multiplicity.
	\end{obs}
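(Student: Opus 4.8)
The assertion is the verbal form of the displayed count
\[
\sum_{i=1}^N m_S(z_i) + \sum_{i=1}^Q m_L(p_i) + \sum_{i=1}^M m_S(b_i) = n-1,
\]
so the plan is to produce this identity by the generalized argument principle and then read it off. First I would fix $w\in\Omega$ and form $h = S_\varphi(\cdot,w)\,L_\varphi(\cdot,w)$. Since $\partial\Omega$ and $\varphi$ are real analytic, both factors extend holomorphically across $\partial\Omega$, so $h$ is meromorphic in a neighbourhood of $\overline\Omega$; its only pole there is the simple pole of $L_\varphi(\cdot,w)$ at $z=w$, where $S_\varphi(w,w)>0$, so $m_L(w)=1$. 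The boundary identity $\overline{S_\varphi(z,w)} = (i\varphi(z))^{-1} L_\varphi(z,w) T(z)$ shows that the boundary zeros of the two factors coincide with equal orders, so each $b_i$ is a zero of $h$ of order $2\,m_S(b_i)$.

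I would then apply the generalized argument principle to $h$ on $\partial\Omega$, in which interior zeros and poles enter with full multiplicity and zeros lying on the contour with half their multiplicity. At an interior point $h$ vanishes to order $m_S+m_L$, so the interior-zero contribution is precisely $\sum_i m_S(z_i)+\sum_i m_L(p_i)$; the sole interior pole is at $w$; and each boundary zero contributes $\tfrac12\cdot 2\,m_S(b_i)=m_S(b_i)$. Hence
\[
\sum_{i=1}^N m_S(z_i) + \sum_{i=1}^Q m_L(p_i) - m_L(w) + \sum_{i=1}^M m_S(b_i) = \frac{1}{2\pi}\,\Delta\arg h .
\]

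For the right-hand side I would multiply the boundary identity by $S_\varphi(z,w)$ to obtain $(i\varphi(z))^{-1} h(z)\,T(z) = |S_\varphi(z,w)|^2$, so that on $\partial\Omega$ one has $h = i\varphi\,|S_\varphi(\cdot,w)|^2 / T$. As $\varphi$ and $|S_\varphi(\cdot,w)|^2$ are positive, $\arg h$ and $-\arg T$ differ by a constant, giving $\Delta\arg h = -\Delta\arg T$. The remaining ingredient is topological: the unit tangent to the positively oriented boundary of an $n$-connected domain has total turning $\Delta\arg T = 2\pi - (n-1)2\pi = -2\pi(n-2)$, the outer curve contributing $+2\pi$ and each of the $n-1$ inner curves $-2\pi$. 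Therefore $\tfrac{1}{2\pi}\Delta\arg h = n-2$, and substituting $m_L(w)=1$ rearranges the displayed equation into the claimed count $n-1$, with all zeros weighted by multiplicity.

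The step I expect to be most delicate is the use of the generalized argument principle with zeros on the integration contour: this is exactly where the real-analytic extension of $S_\varphi(\cdot,w)$ and $L_\varphi(\cdot,w)$ is indispensable, since it makes $h$ genuinely meromorphic across $\partial\Omega$ and justifies assigning half the multiplicity to each boundary zero. The only other place the connectivity enters is the tangent-winding computation, where one must track orientations carefully to land on $\Delta\arg T = -2\pi(n-2)$.
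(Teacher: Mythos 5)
Your proposal is correct and follows essentially the same route as the paper: the same product $h = S_{\varphi}(\cdot,w)L_{\varphi}(\cdot,w)$, the generalized argument principle with boundary zeros weighted by half their multiplicity, the identity $h\,T/(i\varphi) = \vert S_{\varphi}(\cdot,w)\vert^2$ on $\partial\Omega$ to reduce $\Delta\arg h$ to $-\Delta\arg T = 2\pi(n-2)$, and the simple pole at $w$ contributing $-1$. The extra details you supply (that $S_{\varphi}(w,w)>0$ makes the pole of $h$ at $w$ simple, and that the boundary identity forces equal vanishing orders of the two factors at each $b_i$) are exactly the facts the paper uses implicitly.
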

	
	\medskip
	
	Let $\{\varphi_k\}_{k=1}^{\infty}$ be a sequence of positive real analytic functions on $\partial\Omega$ such that $\varphi_k\rightarrow 1$ in $C^{\infty}$ topology on $\partial\Omega$ as $k\rightarrow\infty$. Let $a\in\partial\Omega$. 
	
	\begin{thm}	
		There exists $k_0 \ge 1$ such that for all $k\geq k_0$ the following holds: 
		
		\medskip
		
		Let $w_j\in\Om$ be a sequence that converges to $a$. The functions $S_{\varphi_k}(\cdot,w_j)$ and $L_{\varphi_k}(\cdot,w_j)$ have distinct zeros in $\Omega$ for large enough $j$. The combined zeros $Z^i_k(w_j)$ of $S_{\varphi_k}(z,w_j)$ and $L_{\varphi_k}(\cdot, w_j)$ in $\overline{\Omega}$, become simple zeros, and it is possible to order them so that for each $i=1,\ldots, n-1$, there is a point $Z_k^i(a)\in\ov{\Om}$ such that $Z^i_k(w_j)$ tends to $Z^i_k(a)$ as $j\rightarrow\infty$. 
		
		\medskip
		
		The functions $S_{\varphi_k}(\cdot,a)$ and $L_{\varphi_k}(\cdot,a)$ have distinct zeros in $\Omega$. The combined number of zeros of $S_{\varphi_k}(\cdot, a)$ and $L_{\varphi_k}(\cdot,a)$ in $\Om$, plus the number of zeros of  $S_{\varphi_k}(\cdot,a)$ on $\partial\Om$ $($which are also the zeros of $L_{\varphi_k}(\cdot,a)$ on $\partial\Om)$, counting multiplicity, equals $n-1$, and are same as $Z_{k}^i(a)$. Furthermore, all $Z_{k}^i(a)$ are distinct for $i=1,\ldots, n-1$, and therefore the zeros are simple.
		
		\medskip
		
		Moreover, 
		\[
		\lim_{j,k\rightarrow\infty} Z^i_{k}(w_j) = Z^i(a)
		\]
		where $Z^i(a)$ are the zeros of $S(\cdot,a)$ (and $L(\cdot,a)$ as well).
	\end{thm}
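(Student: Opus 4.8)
The plan is to treat the weighted configuration for large $k$ as a perturbation of the unweighted one, for which the arrangement of zeros at the boundary point $a$ is completely described by Bell's theorem above together with the subsequent Remark: both $S(\cdot,a)$ and $L(\cdot,a)$ are zero-free in $\Omega$ and share exactly $n-1$ simple, pairwise distinct zeros $Z^1(a),\dots,Z^{n-1}(a)$ on $\partial\Omega$, one on each boundary curve other than the one through $a$. First I would fix pairwise disjoint closed disks $D_i$ about $Z^i(a)$, each so small that $\overline{D_i}$ avoids $a$ and the remaining $Z^j(a)$, meets only the boundary curve $\gamma_i$ carrying $Z^i(a)$, and lies in the common domain of holomorphic extension of the kernels (here the real analyticity of $\partial\Omega$ and of each $\varphi_k$ is used). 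Since $S(\cdot,a)$ and $L(\cdot,a)$ have a simple zero at the centre of each $D_i$ and are non-vanishing on the compact set $\overline{\Omega}\setminus\bigl(\bigcup_i\mathrm{int}\,D_i\cup B(a,\epsilon)\bigr)$, Theorem \ref{Szego} and its companion for $l_{\varphi}$ (which gives $L_{\varphi_k}(\cdot,a)\to L(\cdot,a)$ off the diagonal) together with Hurwitz's theorem will show that for all large $k$ each of $S_{\varphi_k}(\cdot,a)$ and $L_{\varphi_k}(\cdot,a)$ has exactly one, necessarily simple, zero in each $D_i$ and none on $\overline{\Omega}\setminus\bigl(\bigcup_i\mathrm{int}\,D_i\cup B(a,\epsilon)\bigr)$.

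The decisive step is a reflection identity extracted from the boundary relation (\ref{Bdy}). Writing $\tau_i$ for the anti-holomorphic Schwarz reflection fixing the real analytic curve $\gamma_i$, both sides of $\overline{S_{\varphi_k}(z,a)}=\frac{1}{i\varphi_k(z)}L_{\varphi_k}(z,a)T(z)$ extend holomorphically across $\gamma_i$, so by the identity principle $\overline{S_{\varphi_k}(\tau_i(z),a)}=\frac{1}{i\varphi_k(z)}L_{\varphi_k}(z,a)T(z)$ throughout $D_i$. Hence $\tau_i$ carries the unique zero of $L_{\varphi_k}(\cdot,a)$ in $D_i$ onto the unique zero of $S_{\varphi_k}(\cdot,a)$ in $D_i$; calling these $\ell^i_k$ and $s^i_k$, one gets $s^i_k=\tau_i(\ell^i_k)$. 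If $s^i_k\in\gamma_i$ then $\ell^i_k=s^i_k$ is a common boundary zero; if $s^i_k\in\Omega$ then $\ell^i_k=\tau_i(s^i_k)$ lies outside $\overline{\Omega}$, so only $S_{\varphi_k}(\cdot,a)$ vanishes inside $D_i$; and symmetrically if $s^i_k$ is exterior. In every case $D_i$ contributes exactly one point to the combined zero set, and the interior zeros of $S_{\varphi_k}(\cdot,a)$ and of $L_{\varphi_k}(\cdot,a)$ never coincide in $\Omega$. Summing over $i$ would prove the assertions at $a$: the two functions have distinct zeros in $\Omega$, and the combined zeros are $n-1$ simple points $Z^i_k(a)$, one in each $D_i$, hence pairwise distinct.

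With the configuration at $a$ in hand, the statement for $w_j\to a$ is the weighted analogue of Bell's theorem for the now fixed weight $\varphi_k$. For $w\in\Omega$ the Observation above already pins the combined count at $n-1$. As $w_j\to a$ one has $S_{\varphi_k}(\cdot,w_j)\to S_{\varphi_k}(\cdot,a)$ and $L_{\varphi_k}(\cdot,w_j)\to L_{\varphi_k}(\cdot,a)$ locally uniformly on $\overline{\Omega}\setminus\{a\}$, so Hurwitz's theorem around each simple point $Z^i_k(a)$ produces, for large $j$, exactly one combined zero $Z^i_k(w_j)$ nearby, which is simple and tends to $Z^i_k(a)$; the reflection identity applied now with second variable $w_j$ again separates the interior zeros of $S_{\varphi_k}(\cdot,w_j)$ and $L_{\varphi_k}(\cdot,w_j)$. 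Since the $n-1$ points so produced exhaust the combined count, they may be ordered by $i$. Finally, for the double limit I would combine Theorem \ref{Szego} with the off-diagonal continuity of the unweighted kernels to obtain $S_{\varphi_k}(\cdot,w_j)\to S(\cdot,a)$ and $L_{\varphi_k}(\cdot,w_j)\to L(\cdot,a)$ uniformly on each circle $\partial D_i$ as $k,j\to\infty$ jointly; a single application of Hurwitz on $D_i$ then yields $\lim_{j,k\to\infty}Z^i_k(w_j)=Z^i(a)$.

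Despite the routine perturbation and Hurwitz steps, the real work lies in two places. The conceptual heart is the reflection identity of the second paragraph: it is exactly what forces each disk $D_i$ to contribute a single combined zero and what keeps the interior zeros of $S_{\varphi_k}(\cdot,a)$ and $L_{\varphi_k}(\cdot,a)$ apart, and it depends essentially on the real analyticity of both $\partial\Omega$ and $\varphi_k$. The most delicate technical point is the exclusion of stray zeros near $a$: because Theorem \ref{Szego} and its companion guarantee convergence only off the diagonal $\Delta$, the non-vanishing of $S_{\varphi_k}(\cdot,a)$ and $L_{\varphi_k}(\cdot,a)$ on a full one-sided neighborhood $B(a,\epsilon)\cap\overline{\Omega}$ must be secured separately, for $L_{\varphi_k}$ from the dominance of its simple pole once $l_{\varphi_k}(\cdot,a)$ is bounded uniformly in $k$ near $a$, and for $S_{\varphi_k}$ from the boundary relation together with the positivity of the Szeg\H{o} kernel on the diagonal. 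It is precisely here that a uniform-in-$k$ estimate near $\Delta$, rather than mere off-diagonal convergence, is what I expect to be the main obstacle.
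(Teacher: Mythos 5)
Your skeleton (Bell's theorem, localization at the $Z^i(a)$, pairing the zeros of $S_{\varphi_k}$ and $L_{\varphi_k}$ by reflection across the boundary, Hurwitz counting, and the Observation to cap the total at $n-1$) is the same circle of ideas as the paper's, but there is a genuine gap at your very first step: the disks $D_i$ lying ``in the common domain of holomorphic extension of the kernels'' need not exist. Real analyticity of each $\varphi_k$ does give each kernel $S_{\varphi_k}(\cdot,a)$ a holomorphic continuation across $\partial\Omega$, but the size of that continuation is governed by the strip on which the complexification of $\varphi_k$ converges, and $C^{\infty}(\partial\Omega)$-convergence $\varphi_k\rightarrow 1$ gives no lower bound on these strips; they may shrink to nothing as $k\rightarrow\infty$. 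Worse, even if a common domain existed, Theorem \ref{Szego} and its companion for $l_{\varphi}$ give uniform convergence only on compact subsets of $(\overline{\Omega}\times\overline{\Omega})\setminus\Delta$, i.e.\ up to the boundary and not across it, so the Hurwitz arguments on full disks $D_i$ straddling $\gamma_i$ (used both to produce the zeros of $S_{\varphi_k}(\cdot,a)$ and $L_{\varphi_k}(\cdot,a)$ for large $k$, and again in your final double-limit step on $\partial D_i$) have no convergence statement to feed on; your reflection identity has the same defect, since it evaluates $\varphi_k$ at points off $\partial\Omega$. The paper circumvents exactly this with a device your proposal is missing: on a fixed neighborhood $U_i$ of $Z^i(a)$ in the double $\hat{\Omega}$, it glues the normalized kernel $S_{\varphi_k}(z,w)/S_{\varphi_k}(z,b)$ on $U_i\cap\overline{\Omega}$ to $\overline{L_{\varphi_k}(R(z),w)/L_{\varphi_k}(R(z),b)}$ on $U_i\cap R(\Omega)$, where $b\in\Omega$ is an auxiliary point and $R$ the antiholomorphic involution of the double. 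Dividing by the kernels at $b$ makes the factors $\varphi_k$ and $T$ cancel in the boundary identity, so the two pieces match exactly on $\partial\Omega$ and define a function $G_k(\cdot,w)$ holomorphic on all of $U_i$, with no extension of $\varphi_k$ required and with $G_k\rightarrow G$ on $U_i$ following from the convergence theorems on $\overline{\Omega}$ alone; all counting and localization is then carried out with argument-principle and residue integrals of $G_k$ over $\partial U_i$.

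The second difficulty, which you flag yourself at the end (excluding zeros of $S_{\varphi_k}(\cdot,a)$ and $L_{\varphi_k}(\cdot,a)$ near $a$, for which you want a uniform-in-$k$ bound on $l_{\varphi_k}$ near the boundary diagonal), is real, is not available from the paper's results, and is also unnecessary: for fixed $k\geq k_0$ the paper rules out every stray zero $z_0$ (interior or boundary, near $a$ or not) by transferring it, via Hurwitz/argument principle (using $G_k$ on a neighborhood of $z_0$ when $z_0\in\partial\Omega$), to a zero of the kernels based at nearby interior points $w_j$; since the $n-1$ zeros already located in the $U_i$ exhaust the count $n-1$ supplied by the Observation for interior base points, this is a contradiction. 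So the near-diagonal uniformity you call ``the main obstacle'' is sidestepped entirely by counting; as written, your proposal leaves both this step and the across-the-boundary convergence unproved.
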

	
	\begin{proof}
		Let $\gamma_i$ denote the $n$ boundary curves of $\Om$. For the sake of simplicity, we denote the boundary curves in such a manner that $a\in \gamma_n$. 
		Let $\Hat{\Om}$ denote the double of $\Om$, $R(z)$ denote the antiholomorphic involution on $\Hat{\Om}$ which fixes the boundary, and $\Tilde{\Om} = R(\Om)$ denote the reflection of $\Om$ in $\Hat{\Om}$ across the boundary. For each $i\neq n$, choose a smoothly bounded simply connected neighborhood $U_i$ of $Z^i(a)$ in $\Hat{\Om}$ and a point $b\in \Om$ such that 
		
		\begin{enumerate}
			\item[(i)] $\overline{U_i}$ are mutually disjoint
			\item[(ii)] $z\in U_i$ if and only if $R(z)\in U_i$
			\item[(iii)] $\ov{U_i}\cap ({\gamma_n \cup \{b\}}) = \emptyset$ 
			\item[(iv)] $S(z,b) \neq 0$ for all $z\in \overline{U_i}\cap \ov{\Om}$.
		\end{enumerate}
		
		\medskip
		
		Let $C_i$ denote the boundary curve of $U_i$. We claim that there exists a neighborhood $U$ of $a$ in $\overline{\Om}$ such that $S(z,w)\neq 0$ for all $z\in C_i\cap \overline{\Om}$ and $w\in \ov{U}$. If it were not true, then for every positive integer $m$, there would exist $w_m\in\ov{\Om}$ with $\vert w_m - a\vert \leq 1/m$ and $z_m\in C_i\cap \overline{\Om}$ for some $i$ such that $S(z_m,w_m) = 0$. By the pigeonhole principle, infinitely many $z_m$ lie on some $C_{i_0}$. So, there exists a subsequence $z_{m_r}$ with $z_{m_r}\rightarrow z_0\in C_{i_0}$ as $r\rightarrow\infty$. This implies that $S(z_0,a) = 0$ which is a contradiction.
		
		\medskip
		
		We know that
		\begin{equation}\label{thms}
			\lim_{k\rightarrow\infty} S_{\varphi_k}(z,w) = S(z,w)
			\quad \text{and}\quad
			\lim_{k\rightarrow\infty} L_{\varphi_k}(z,w) = L(z,w)
		\end{equation}
		locally uniformly on $(\ov{\Om}\times\ov{\Om})\setminus \{(z,z): z\in\partial\Om\}$ and $(\ov{\Om}\times\ov{\Om})\setminus \{(z,z): z\in \overline{\Om}\}\}$ respectively. Therefore, we can choose an integer $k_0 \ge 1$ such that for all $k\geq k_0$, 
		
		\begin{enumerate}
			\item[(i)] $S_{\varphi_k}(z,w)$ does not vanish for $z\in C_i\cap \overline{\Om}$, $w\in \ov{U}$
			\item[(ii)] $L_{\varphi_k}(R(z),w)$ does not vanish for $z\in C_i\cap (\Tilde{\Om}\cup \partial\Om)$, $w\in \ov{U}$
			\item[(iii)] $S_{\varphi_k}(z,b)$ does not vanish for $z\in \ov{U_i}\cap \overline{\Om}$
			\item[(iv)] $L_{\varphi_k}(R(z),b)$ does not vanish for $z\in \ov{U_i}\cap (\Tilde{\Om}\cup \partial\Om)$.
		\end{enumerate}
		
		\medskip
		
		For $k\geq k_0$ and $w\in \ov{U}$, define the functions $G_k(z,w)$ by
		\begin{equation}
		G_k(z,w)
		=
		\begin{cases}
			\mathcal{S}_{\varphi_k}(z,w)& z\in U_i\cap \overline{\Omega}
			\\
			\overline{\mathcal{L}_{\varphi_k}(R(z),w)}& z\in U_i\cap \Tilde{\Omega}
		\end{cases}
		\end{equation}
		where 
		\begin{equation}
		\mathcal{S}_{\varphi_k}(z,w) = \frac{S_{\varphi_k}(z,w)}{S_{\varphi_k}(z,b)}
		\quad
		\text{and}
		\quad
		\mathcal{L}_{\varphi_k}(z,w) = \frac{L_{\varphi_k}(z,w)}{L_{\varphi_k}(z,b)}.
		\end{equation}
		Recall that
		\[
			\overline{S_{\varphi_k}(z,w)} = \frac{1}{i\varphi_k(z)}L_{\varphi_k}(z,w)T(z),
			\quad\,\,
			w\in\Om, z\in\partial\Omega.
		\]
		This implies that
		\[
		\mathcal{S}_{\varphi_k}(z,w)
		=
		\frac{S_{\varphi_k}(z,w)}{S_{\varphi_k}(z,b)}
		=
		\overline{\left(\frac{L_{\varphi_k}(z,w)}{L_{\varphi_k}(z,b)}\right)}
		=
		\overline{\left(\frac{L_{\varphi_k}(R(z),w)}{L_{\varphi_k}(R(z),b)}\right)}
		=
		\overline{\mathcal{L}_{\varphi_k}(R(z),w)},
		\quad
		z\in\partial\Om.
		\]
		Therefore, the functions $G_k$ are well-defined and holomorphic on $U_i$ and extend smoothly to $C_i$. Similarly, we define the function $G(z,w)$ by
		\begin{equation}
		G(z,w)
		=
		\begin{cases}
			\mathcal{S}(z,w)& z\in U_i\cap \overline{\Omega}
			\\
			\overline{\mathcal{L}(R(z),w)}& z\in U_i\cap \Tilde{\Omega}
		\end{cases}
		\end{equation}
		where 
		\begin{equation}
		\mathcal{S}(z,w) = \frac{S(z,w)}{S(z,b)}
		\quad
		\text{and}
		\quad
		\mathcal{L}(z,w) = \frac{L(z,w)}{L(z,b)}.
		\end{equation}
		The function $G$ is well-defined, holomorphic on $U_i$ and extends smoothly to $C_i$. 
		Now, the function 
		\begin{equation}
		N_{ki}(w) = \int_{C_i} \frac{{G_k}'(z, w)}{G_k(z,w)} dz
		\end{equation}
		converges uniformly to 
		\begin{equation}
		N_i(w)=\int_{C_i}  \frac{{G}'(z, w)}{G(z,w)} dz
		\end{equation}
		for $w\in \overline{U}$ as $k\rightarrow\infty$. Here, $N_{ki}(w)$ and $N_i(w)$ are the number of zeros of $G_k(\cdot, w)$ and $G(\cdot,w)$ in $U_i$, respectively. We go back and choose a larger $k_0 \ge 1$, if needed, so that
		\[
		\vert N_{ki}(w) - N_i(w)\vert < 1/2
        \]
		for every $i,w\in \overline{U}$ and $k\geq k_0$.

        \medskip
		
		We shall now fix a $k\geq k_0$. Let $w_j\in\Omega$ be a sequence that converges to $a$. Assume, without loss of generality, that $w_j\in U$ for all $j$. The zeros of $G(\cdot, w_j)$ in $U_i$ are same as the zeros of $S(\cdot, w_j)$. So, there exists $j_0 \ge 1$ such that $N_i(w_j) = 1$ for all $i$ and $j\geq j_0$. Therefore, $N_{ki}(w_j) = 1$ for all $i$ and $j\geq j_0$. By the continuity of $N_{ki}(w)$ as a function of $w$, this implies that $N_{ki}(a) = 1$.
		
		\medskip
		
		The zeros of $G_k(\cdot,w_j)$ are the zeros of $S_{\varphi_k}(\cdot,w_j)$ or $L_{\varphi_k}(R(\cdot),w_j)$. The combined number of zeros of $S_{\varphi_k}(\cdot, w_j)$ and $L_{\varphi_k}(\cdot, w_j)$ in $\Om$, plus the number of zeros of $S_{\varphi_k}(\cdot,w_j)$ on $\partial\Om$ (which are also the zeros of $L_{\varphi_k}(\cdot,w_j)$ on $\partial\Om$), equals $n-1$. Since $N_{ki}(w_j) =1$ for all $i=1,\ldots,n-1$ and $j\geq j_0$, the functions $S_{\varphi_k}(\cdot,w_j)$ and $L_{\varphi_k}(\cdot,w_j)$ must have distinct zeros in $\Om$. Furthermore, the combined zeros $Z^i_k(w_j)$ of $S_{\varphi_k}(\cdot,w_j)$ and $L_{\varphi_k}(\cdot,w_j)$ in $\overline{\Om}$, are simple for all $j\geq j_0$. We order these zeros in such a manner that $Z^i_k(w_j)\in U_i$ for all $j\geq j_0$.
		
		\medskip
		
		Let $Z_i(G_k(\cdot,w_j))$ and $Z_i(G_k(\cdot,a))$ denote the zero of $G_k(\cdot,w_j)$ and $G_k(\cdot,a)$ in $U_i$ respectively. By an application of the residue theorem and (\ref{thms}),
		\[
		Z_i(G_k(\cdot,w_j))= \int_{C_i} \frac{z\,{G_k}'(z, w_j)}{G_k(z,w_j)} dz 
		\quad 
		\rightarrow
		\quad
		\int_{C_i}  \frac{z\,{G_k}'(z, a)}{G_k(z,a)} dz 
		=
		Z_i(G_k(\cdot,a))
        \]
		as $j\rightarrow\infty$.

    \medskip
        
		If $Z_i(G_k(\cdot,a))\in \ov{\Om}$, we let $Z^i_k(a) = Z_i(G_k(\cdot,a))$. Otherwise, we let $Z^i_k(a) = R(Z_i(G_k(\cdot,a)))$. Then, 
		\[
		\lvert Z^{i}_k(w_j)- Z^i_k(a) \vert 
		\leq 
		\vert Z_i(G_k(\cdot,w_j))- Z_i(G_k(\cdot,a)) \vert
		\rightarrow 0
		\quad
		\text{as }j\rightarrow\infty.
		\]
		Furthermore, $Z^i_k(a)\in U_i$ are distinct.
		
		\medskip
		
		Since the zeros of $G_k(\cdot,a)$ are the zeros of $S_{\varphi_k}(\cdot,a)$ or $L_{\varphi_k}(R(\cdot),a)$, the points $Z^i_k(a)$ are the zeros of $S_{\varphi_k}(\cdot,a)$ or $L_{\varphi_k}(\cdot,a)$ in $\overline{\Om}$. Let, if possible, $z_0\in\ov{\Om}$ be another zero of $S_{\varphi_k}(\cdot,a)$ or $L_{\varphi_k}(\cdot,a)$. Let us assume, without loss of generality, that $S_{\varphi_k}(z_0,a) = 0$, as a similar reasoning can be applied if $L_{\varphi_k}(z_0,a)=0$.  It is clear that $z_0\notin \overline{(U_i\cap \Om)}$ for any $i=1,\ldots,n-1$. If $z_0\in\Om$, choose a ball $B(z_0,\delta)\subset\subset\Om$ centered at $z_0$ of radius $\delta>0$ such that 
		
		\begin{itemize}
			\item[(i)] $\overline{B(z_0,\delta)}$ is disjoint from $\ov{U_i\cap \Om}$
			\item[(ii)] $S_{\varphi_k}(\cdot,a)$ is non-vanishing on $C=\partial B(z_0.\delta)$. 
		\end{itemize}
		
		\noindent Then $S_{\varphi_k}(\cdot,w_j)$ is also non-vanishing on $C$ for all large $j$. If it were not true, then for every $j \ge 1$, there would exist $z_j\in C$ such that $S_{\varphi_k}(z_j,w_j) = 0$. By passing to a convergent subsequence of $z_j$, we would obtain a zero of $S_{\varphi_k}(\cdot,a)$ on $C$, which is not possible. As $j\rightarrow\infty$,
		\[
		\int_{C}\frac{{S_{\varphi_k}}'(z,w_j)}{S_{\varphi_k}(z,w_j)} dz
		\quad
		\rightarrow
		\quad
		\int_{C}\frac{{S_{\varphi_k}}'(z,a)}{S_{\varphi_k}(z,a)} dz
		\quad
		\geq 1.
		\]
		So, there must exist a zero of $S_{\varphi_k}(\cdot,w_j)$ in $B(z_0,\delta)$ for all large $j$, hence a contradiction. If $z_0\in\partial\Om$, we will work with $G_k(\cdot,a)$ on an appropriate neighborhood of $z_0$, and arrive at a contradiction by a similar reasoning. Thus, we have proved that $Z^i_k(a)$ are the only zeros of $S_{\varphi_k}(\cdot,a)$ and $L_{\varphi_k}(\cdot,a)$. Since $N_{ki}(a) = 1$, it follows that $S_{\varphi_k}(\cdot,a)$ and $L_{\varphi_k}(\cdot,a)$ have distinct zeros in $\Om$. 
		
		\medskip
		
		We have also proved that the combined number of zeros of $S_{\varphi_k}(\cdot, a)$ and $L_{\varphi_k}(\cdot,a)$ in $\Om$, plus the number of zeros of  $S_{\varphi_k}(\cdot,a)$ on $\partial\Om$ $($which are also the zeros of $L_{\varphi_k}(\cdot,a)$ on $\partial\Om)$, counting multiplicity, equals $n-1$, and are same as $Z_{k}^i(a)$. Moreover, these zeros are simple.
		
		\medskip
		
		Finally, let $\epsilon>0$ be given. Using (\ref{thms}), we can choose $k_1\ge 1$ with $k_1\geq k_0$ such that
		\[
		\left\lvert
		\int_{C_i} z \; \frac{{G'_k}(z, w)}{G_k(z,w)} dz - \int_{C_i}  z \; \frac{G'(z, w)}{G(z,w)} dz 
		\right\rvert
		\,\,<\,\,
		\frac{\epsilon}{2}
		\quad
		\text{for all }i=1,\ldots, n-1,
		\,w\in\overline{U}
		\text{ and }
		k\geq k_1.
		\]
		Choose $j_1 \ge 1$ with $j_1\geq j_0$ such that
		\[
		\left\lvert
		\int_{C_i} z \; \frac{G'(z, w_j)}{G(z,w_j)} dz - \int_{C_i} z \; \frac{G'(z, a)}{G(z,a)} dz 
		\right\rvert 
		\,\,<\,\,
		\frac{\epsilon}{2}
		\quad
		\text{for all }i=1,\ldots, n-1
		\text{ and }
		j\geq j_1.
		\]
		Since $Z^i(a)\in\partial\Omega$, we have $\vert Z^i_k(w_j) - Z^i(a) \vert = \vert Z_i(G_k(\cdot,w_j)) - Z^i(a)\vert$. Therefore, for $j\geq j_1$ and $k\geq k_1$, we obtain using the residue theorem that
		\begin{multline*}
			\vert Z^i_k(w_j) - Z^i(a) \vert
			=
			\vert Z_i(G_k(\cdot,w_j)) - Z^i(a)\vert
			\\
			=
			\left\lvert
			\int_{C_i} z \; \frac{G'_k(z, w_j)}{G_k(z,w_j)} dz - \int_{C_i} z \; \frac{G'(z, a)}{G(z,a)} dz
			\right\rvert
			\\
			\leq
			\left\lvert
			\int_{C_i} z \; \frac{G'_k(z, w_j)}{G_k(z,w_j)} dz - \int_{C_i} z \;\frac{G'(z, w_j)}{G(z,w_j)} dz
			\right\rvert
			+
			\left\lvert
			\int_{C_i} z \;\frac{G'(z, w_j)}{G(z,w_j)} dz - \int_{C_i} z \; \frac{G'(z, a)}{G(z,a)} dz
			\right\rvert
			\\
			<
			\frac{\epsilon}{2} + \frac{\epsilon}{2} 
			= \epsilon.
		\end{multline*}
		Thus, 
		\begin{equation}
		\lim_{j,k\rightarrow\infty} Z^i_{k}(w_j) = Z^i(a)
		\end{equation}
		and this completes the proof of the theorem.		
	\end{proof}


	\section{Weighted Ahlfors maps}
	
	\begin{thm}[Nehari \cite{Nehari}]
		Let $\Omega\subset\mathbb{C}$ be a bounded $n$-connected domain with $C^{\infty}$ smooth boundary such that no boundary curve is an isolated point. Let $\varphi$ be positive real-valued $C^{\infty}$ smooth function on $\partial\Om$. Given $a\in\Om$, let
		\[
		\mathcal{B}_{\varphi} = \left\lbrace
		f\in\mathcal{O}(\Om) : f(a) = 0 \text{ and } \limsup_{z\rightarrow z_0}\vert f(z)\vert \leq \frac{1}{\varphi(z_0)}\text{ for }z_0\in\partial\Om\right\rbrace.
		\]
		Then for every $f\in \mathcal{B}_{\varphi}$,
		\begin{equation}\label{ahlfors}
			\vert f'(a)\vert \leq \vert F'(a) \vert = 2\pi S_{\varphi}(a,a)
		\end{equation}
		where 
		\begin{equation}
		F(z) = \frac{S_{\varphi}(z,a)}{L_{\varphi}(z,a)}
		\end{equation}
		is the weighted Ahlfors map. If $L_{\varphi}(z,a) \neq 0$ for all $z\in \Om$, then $F\in \mathcal{B}_{\varphi}$ and it is the unique function (modulo rotation) in $\mathcal{B}_{\varphi}$ satisfying (\ref{ahlfors}).
	\end{thm}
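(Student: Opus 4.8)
The plan is to reduce the whole statement to the reproducing property of $S_\varphi$ together with the boundary identity $S_\varphi(a,\z)=\tfrac{1}{i\varphi(\z)}L_\varphi(\z,a)T(\z)$ on $\pa\Om$. Fix $f\in\mathcal B_\varphi$. Since $\pa\Om$ is compact and $\varphi$ is positive and continuous, $1/\varphi$ is bounded, so the $\limsup$ condition forces $f$ to be bounded; hence $f\in H^\infty(\Om)\subset H^2(\pa\Om)$ and its boundary values satisfy $|f|\le 1/\varphi$ a.e. Because $f(a)=0$ and $L_\varphi(\cdot,a)$ has a simple pole at $a$ with residue $1/2\pi$, the product $g:=f\,L_\varphi(\cdot,a)$ is holomorphic on all of $\Om$; using $L_\varphi(z,a)=\tfrac{1}{2\pi(z-a)}+l_\varphi(z,a)$ and $f(a)=0$ one reads off $g(a)=f'(a)/2\pi$. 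As $L_\varphi(\cdot,a)$ extends smoothly to $\pa\Om$, $g$ is bounded, so $g\in H^2_\varphi(\pa\Om)$ and the reproducing property applies: $f'(a)/2\pi=g(a)=\langle g,S_\varphi(\cdot,a)\rangle_\varphi$.

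The inequality now follows from Cauchy--Schwarz in $L^2_\varphi(\pa\Om)$. First, the reproducing property gives $\Vert S_\varphi(\cdot,a)\Vert_\varphi^2=S_\varphi(a,a)$. Next, taking moduli in the boundary identity and using $|T|=1$ together with the hermitian symmetry $|S_\varphi(a,\z)|=|S_\varphi(\z,a)|$ yields $|L_\varphi(\z,a)|=\varphi(\z)\,|S_\varphi(\z,a)|$ on $\pa\Om$. Combining this with $|f|\le 1/\varphi$ on $\pa\Om$, I would estimate
\[
\Vert g\Vert_\varphi^2=\int_{\pa\Om}|f|^2\,|L_\varphi(\cdot,a)|^2\,\varphi\,ds\le\int_{\pa\Om}\frac{|L_\varphi(\cdot,a)|^2}{\varphi}\,ds=\int_{\pa\Om}|S_\varphi(\cdot,a)|^2\,\varphi\,ds=S_\varphi(a,a).
\]
Hence $|f'(a)|/2\pi=|\langle g,S_\varphi(\cdot,a)\rangle_\varphi|\le\Vert g\Vert_\varphi\,\Vert S_\varphi(\cdot,a)\Vert_\varphi\le S_\varphi(a,a)$, i.e.\ $|f'(a)|\le 2\pi S_\varphi(a,a)$. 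This step needs no hypothesis on the zeros of $L_\varphi$.

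For $F$ itself, the expansion $L_\varphi(z,a)=\tfrac{1}{2\pi(z-a)}+l_\varphi(z,a)$ makes $F=S_\varphi(\cdot,a)/L_\varphi(\cdot,a)$ holomorphic near $a$ with $F(a)=0$ and $F'(a)=2\pi S_\varphi(a,a)$; since $S_\varphi(a,a)=\Vert S_\varphi(\cdot,a)\Vert_\varphi^2>0$, this gives $|F'(a)|=2\pi S_\varphi(a,a)$ unconditionally. On $\pa\Om$ the boundary identity gives $|F|=|S_\varphi(\cdot,a)|/|L_\varphi(\cdot,a)|=1/\varphi$. Under the hypothesis $L_\varphi(\cdot,a)\neq 0$ on $\Om$, $F$ is holomorphic on $\Om$, and because $S_\varphi(\cdot,a)$ and $L_\varphi(\cdot,a)$ have exactly the same boundary zeros, the quotient stays bounded up to $\pa\Om$ with $\limsup_{z\to z_0}|F(z)|\le 1/\varphi(z_0)$; thus $F\in\mathcal B_\varphi$ and it realizes equality.

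Finally, for uniqueness, equality in the chain above forces equality in Cauchy--Schwarz, i.e.\ $f\,L_\varphi(\cdot,a)=\la\,S_\varphi(\cdot,a)$ for a constant $\la$, whence $f=\la F$; and equality in the estimate for $\Vert g\Vert_\varphi^2$ forces $|f|=1/\varphi$ a.e.\ on $\pa\Om$, which together with $|F|=1/\varphi$ gives $|\la|=1$. So every extremal $f$ is a unimodular rotation of $F$. I expect the main obstacle to be the functional-analytic bookkeeping rather than the algebra: one must check carefully that a general member of $\mathcal B_\varphi$ and the auxiliary product $g=f\,L_\varphi(\cdot,a)$ genuinely lie in $H^2_\varphi(\pa\Om)$ so that the reproducing identity is legitimate, and one must control $F$ at the common boundary zeros of $S_\varphi(\cdot,a)$ and $L_\varphi(\cdot,a)$ to conclude $F\in\mathcal B_\varphi$. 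Once these regularity points are in place, the inequality and the equality discussion are immediate.
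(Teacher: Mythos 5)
Your inequality and uniqueness arguments are correct, and the paper itself offers no proof of this statement (it is quoted from Nehari), so there is nothing internal to compare against; what you wrote is the natural weighted version of Bell's Cauchy--Schwarz proof for the classical Ahlfors map. Concretely: every $f \in \mathcal{B}_\varphi$ is bounded (compactness of $\partial\Omega$ plus the pointwise limsup bound), so $g = f\,L_\varphi(\cdot,a) \in H^2(\partial\Omega)$ with $g(a) = f'(a)/2\pi$; the reproducing property, the identity $\Vert S_\varphi(\cdot,a)\Vert_\varphi^2 = S_\varphi(a,a)$, and the boundary relation $\vert L_\varphi(\cdot,a)\vert = \varphi\,\vert S_\varphi(\cdot,a)\vert$ give $\Vert g\Vert_\varphi^2 \le S_\varphi(a,a)$, hence $\vert f'(a)\vert \le 2\pi S_\varphi(a,a)$; and equality forces $f\,L_\varphi(\cdot,a) = \lambda S_\varphi(\cdot,a)$, i.e. $f = \lambda F$ with $\vert\lambda\vert = 1$. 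All of this is sound, and, as you note, requires no hypothesis on the zeros of $L_\varphi(\cdot,a)$.

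The genuine gap is the claim $F \in \mathcal{B}_\varphi$. You assert that since $S_\varphi(\cdot,a)$ and $L_\varphi(\cdot,a)$ have the same zeros on $\partial\Omega$, the quotient stays bounded up to $\partial\Omega$ with $\limsup_{z\to z_0}\vert F(z)\vert \le 1/\varphi(z_0)$. For merely $C^\infty$ data this does not follow — neither from equality of the boundary zero sets nor even from the full modulus identity $\vert L_\varphi\vert = \varphi\vert S_\varphi\vert$ on $\partial\Omega$. The obstruction is of singular-inner-function type: a function can lie in $A^\infty(\Omega)$, be zero-free in $\Omega$, vanish to infinite order at a single boundary point, and still carry a nontrivial singular inner factor; its reciprocal is then holomorphic in $\Omega$ but blows up near that boundary point faster than any power, so a modulus identity holding on $\partial\Omega$ off the common zero set gives no control of the interior limsup of the quotient there. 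To close this one needs one of the following: (i) real-analytic boundary and weight, as in Nehari's original setting — then boundary zeros have finite order, the modulus identity forces equal orders, and $F$ continues holomorphically across $\partial\Omega$ with $\vert F\vert = 1/\varphi$ there (note that this paper's own Section 3 switches to real-analytic data precisely when it analyzes boundary zeros); (ii) a proof that $F$ lies in the Smirnov class $N^+(\Omega)$, after which $\log\vert F\vert$ is dominated by the Poisson integral of the continuous function $\log(1/\varphi)$ and the pointwise limsup follows; or (iii) a proof that $S_\varphi(\cdot,a)$ and $L_\varphi(\cdot,a)$ never vanish on $\partial\Omega$, which the paper explicitly lists as an open problem. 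Without one of these, the membership $F\in\mathcal{B}_\varphi$ — and with it the attainment half of the theorem in the $C^\infty$ category — is not established.
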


		Since 
		\[
		\overline{S_{\varphi}(z,a)} = \frac{1}{i\varphi(z)} L_{\varphi}(z,a) T(z), \quad z\in\partial\Om,
		\]
		the $\lim_{z\rightarrow z_0}\vert F(z)\vert$ exists for all $z_0\in\partial\Om$, and
		\begin{equation}
		\lim_{z\rightarrow z_0} \vert F(z)\vert 
		=
		\frac{1}{\varphi(z_0)},\quad z_0\in\partial\Om.
		\end{equation}
	Thus, $F$ has a well defined limit on $\pa \Omega$ and this strengthens Nehari's theorem mentioned above.
	
	\begin{thm}
		Let $\Omega\subset\mathbb{C}$ be a bounded $n$-connected domain with $C^{\infty}$ smooth boundary such that no boundary curve is an isolated point. Let $\{\varphi_k\}_{k=1}^{\infty}$ be a sequence of positive real-valued $C^{\infty}$ smooth functions on $\partial\Omega$ such that $\varphi_k\rightarrow 1$ in the $C^{\infty}$ topology on $\partial\Omega$ as $k\rightarrow\infty$. For a compact set $W\subset\Omega$, there exists $k_0 \ge 1$ such that for all $a\in W$ and $k\geq k_0$
		\begin{itemize}
			\item[(i)] $L_{\varphi_k}(\cdot,a)$ does not vanish on $\overline{\Om}$ 
			\item[(ii)] $S_{\varphi_k}(\cdot,a)$ has $n-1$ zeros in $\Om$ and does not vanish on $\partial\Om$ 
		\end{itemize}
		Furthermore, the weighted Ahlfors maps with respect to $a\in W$ converge to the corresponding classical Ahlfors map, uniformly on $\ov{\Om}$. The convergence is also uniform with respect to $a\in W$. In other words,
		\begin{equation}
		\lim_{k\rightarrow\infty} F_k(z,a)
		:=
		\lim_{k\rightarrow\infty} \frac{S_{\varphi_k}(z,a)}{L_{\varphi_k}(z,a)} 
		=
		\frac{S(z,a)}{L(z,a)} 
		=:
		F(z,a)
		\end{equation}
		uniformly for $a\in W$ and $z\in\overline{\Om}$.
	\end{thm}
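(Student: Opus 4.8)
The plan is to push the classical non-vanishing and zero-counting facts for $S(\cdot,a)$ and $L(\cdot,a)$ onto the weighted kernels for large $k$, uniformly in $a\in W$, by feeding the two convergence theorems of Section 2 into the zero-count Observation of Section 3. Fix once and for all a $\delta_0>0$ so small that the closed $\delta_0$-neighbourhood $W_{\delta_0}=\{z:\mathrm{dist}(z,W)\le\delta_0\}$ is a compact subset of $\Omega$. Two uniform-convergence inputs organize everything. First, $\overline{\Omega}\times W$ is a compact subset of $\overline{\Omega}\times\Omega$ and the near-diagonal block $W_{\delta_0}\times W$ is a compact subset of $\Omega\times\Omega$; hence, by \cite{JV} and the two theorems of Section 2, $S_{\varphi_k}\to S$ and $l_{\varphi_k}\to l$ uniformly there, and in particular $M_0:=\sup_k\sup_{W_{\delta_0}\times W}\lvert l_{\varphi_k}\rvert<\infty$. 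Second, for any $0<\delta\le\delta_0$ the set $K_\delta=\{(z,a)\in\overline{\Omega}\times W:\lvert z-a\rvert\ge\delta\}$ is compact and disjoint from every diagonal, so those same theorems give $S_{\varphi_k}\to S$ and $L_{\varphi_k}\to L$ uniformly on $K_\delta$.

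For part (i), split $\overline{\Omega}$ into the regions $\lvert z-a\rvert\le\delta$ and $\lvert z-a\rvert\ge\delta$, where $\delta\le\delta_0$ is chosen so small that $\tfrac{1}{2\pi\delta}>2M_0$. In the first region $(z,a)\in W_{\delta_0}\times W$, and writing $L_{\varphi_k}(z,a)=\tfrac{1}{2\pi(z-a)}+l_{\varphi_k}(z,a)$ yields $\lvert L_{\varphi_k}(z,a)\rvert\ge \tfrac{1}{2\pi\delta}-M_0>0$, so the pole forces $L_{\varphi_k}(\cdot,a)$ to be zero-free there. In the second region the classical Garabedian kernel is zero-free, since the Remark following Bell's theorem gives $L(z,a)\ne 0$ for $z\in\overline{\Omega},\ a\in\Omega$; thus $\lvert L\rvert\ge c>0$ on the compact set $K_\delta$, and uniform convergence yields $\lvert L_{\varphi_k}\rvert\ge c/2$ for all $k\ge k_0$. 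Together these prove (i). For part (ii), observe that by (i) the function $L_{\varphi_k}(\cdot,a)$ has no zeros in $\overline{\Omega}$, in particular none in $\Omega$ and none on $\partial\Omega$. The boundary identity $\overline{S_{\varphi_k}(z,w)}=\tfrac{1}{i\varphi_k(z)}L_{\varphi_k}(z,w)T(z)$ gives $\lvert S_{\varphi_k}(z,a)\rvert=\varphi_k(z)^{-1}\lvert L_{\varphi_k}(z,a)\rvert$ for $z\in\partial\Omega$, so $S_{\varphi_k}(\cdot,a)$ inherits the absence of boundary zeros. Invoking the Observation of Section 3, the total count $n-1$ is then carried entirely by the interior zeros of $S_{\varphi_k}(\cdot,a)$, which therefore number exactly $n-1$; this holds uniformly for $a\in W$ and $k\ge k_0$.

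For the convergence of the Ahlfors maps, work again with the two regions. On $K_\delta$ both numerator and denominator of $F_k=S_{\varphi_k}/L_{\varphi_k}$ converge uniformly while $\lvert L_{\varphi_k}\rvert\ge c/2$, so $F_k\to F$ uniformly there. The only delicate region is near the diagonal, where $L$ has a pole and the raw quotient is a $0/\infty$ indeterminate for which the convergence $L_{\varphi_k}\to L$ is unavailable; this is the genuine obstacle. It is dissolved by clearing the pole:
\begin{equation*}
F_k(z,a)=\frac{2\pi(z-a)\,S_{\varphi_k}(z,a)}{1+2\pi(z-a)\,l_{\varphi_k}(z,a)},
\end{equation*}
and likewise for $F$ with $S,l$ in place of $S_{\varphi_k},l_{\varphi_k}$. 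For $\lvert z-a\rvert\le\delta$ one has $\lvert 2\pi(z-a)l_{\varphi_k}\rvert\le 2\pi\delta M_0<1$, so the denominators stay uniformly close to $1$ and are bounded below, while $S_{\varphi_k}\to S$ and $l_{\varphi_k}\to l$ uniformly on $W_{\delta_0}\times W$; hence $F_k\to F$ uniformly on the near-diagonal region as well. Combining the two regions gives the claimed convergence uniformly in $(z,a)\in\overline{\Omega}\times W$.

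The uniformity in $a$ is automatic, since every estimate is taken over the compact sets $\overline{\Omega}\times W$, $K_\delta$, and $W_{\delta_0}\times W$; all routine parts reduce to compactness together with uniform convergence. The single non-routine idea, which I expect to be the crux, is to replace the singular kernel $L_{\varphi_k}$ by the regularized kernel $l_{\varphi_k}$, which \emph{does} converge uniformly up to and across the interior diagonal. This one device simultaneously produces the lower bound on $\lvert L_{\varphi_k}\rvert$ near the pole needed for (i) and the uniform control of $F_k$ needed for the Ahlfors statement.
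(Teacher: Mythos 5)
Your proposal is essentially correct and takes a genuinely different route from the paper, but it has one step that does not stand as written: the appeal to the Observation of Section 3 in part (ii). That Observation is established only for domains with \emph{real analytic} boundary curves and \emph{real analytic} weights; its derivation applies the generalized argument principle to $h=S_{\varphi}(\cdot,w)L_{\varphi}(\cdot,w)$ viewed as a function meromorphic in a neighborhood of $\overline{\Omega}$, and that extension across $\partial\Omega$ is exactly what real analyticity provides. In the theorem at hand the boundary and the weights $\varphi_k$ are only $C^{\infty}$, so the Observation cannot be invoked. The repair is short and uses only what you already have: by your part (i) and the boundary identity, neither $S_{\varphi_k}(\cdot,a)$ nor $L_{\varphi_k}(\cdot,a)$ vanishes on $\partial\Omega$, so the \emph{ordinary} argument principle applies to $S_{\varphi_k}(\cdot,a)L_{\varphi_k}(\cdot,a)$, which is meromorphic on $\Omega$ with a single simple pole at $z=a$ and smooth up to the boundary. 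The identity $\tfrac{1}{i\varphi_k(z)}S_{\varphi_k}(z,a)L_{\varphi_k}(z,a)T(z)=\lvert S_{\varphi_k}(z,a)\rvert^2>0$ on $\partial\Omega$ forces $\Delta\arg\bigl(S_{\varphi_k}(\cdot,a)L_{\varphi_k}(\cdot,a)\bigr)=-\Delta\arg T=2\pi(n-2)$, hence the combined number of interior zeros is $n-1$; since $L_{\varphi_k}(\cdot,a)$ has none by (i), all $n-1$ belong to $S_{\varphi_k}(\cdot,a)$. This is precisely the computation the paper runs in the $C^{\infty}$ category; alternatively, Rouch\'e's theorem on $\partial\Omega$, using the uniform convergence $S_{\varphi_k}\to S$ on $\partial\Omega\times W$ and the classical count of $n-1$ zeros for $S(\cdot,a)$, gives the same conclusion.

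Apart from this, your argument is sound and its architecture is inverted relative to the paper's. The paper proves (ii) first and deduces (i): after securing boundary non-vanishing of both kernels for large $k$, it gets the combined count $n-1$ from the argument principle, then shows that the integer-valued counting integral $N_k(a)=\int_{\partial\Omega}S_{\varphi_k}'(z,a)/S_{\varphi_k}(z,a)\,dz$ converges uniformly on $W$ to $N(a)\equiv n-1$, so eventually $S_{\varphi_k}(\cdot,a)$ carries all $n-1$ zeros and none are left over for $L_{\varphi_k}(\cdot,a)$. You prove (i) directly and independently of any zero count, from the decomposition $L_{\varphi_k}(z,a)=\tfrac{1}{2\pi(z-a)}+l_{\varphi_k}(z,a)$: near the diagonal the pole dominates the uniformly bounded $l_{\varphi_k}$, and away from it the classical non-vanishing of $L$ plus uniform convergence takes over; (ii) then follows. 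Similarly, for the Ahlfors maps the paper converges on $\partial\Omega\times W$ and propagates inward by the maximum modulus principle (valid since $F_k(\cdot,a),F(\cdot,a)\in A^{\infty}(\Omega)$ once (i) holds), whereas you clear the pole, writing $F_k=2\pi(z-a)S_{\varphi_k}/\bigl(1+2\pi(z-a)l_{\varphi_k}\bigr)$, and obtain uniform convergence on the near-diagonal block directly. Both are correct; the paper's maximum-modulus step is shorter, while your pole-clearing device is more quantitative, decouples (i) from the counting argument, and isolates the right observation that it is $l_{\varphi_k}$, not $L_{\varphi_k}$, which converges across the interior diagonal.
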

	
	\begin{proof}
		Choose $k_1 \ge 1$ such that $L_{\varphi_k}(z,a)$ and $S_{\varphi_k}(z,a)$ do not vanish for $k\geq k_1$, $z\in \partial\Om$ and $a\in W$. For $k\geq k_1$, 
		\[
		\frac{1}{i\varphi_k(z)}S_{\varphi_k}(z,a)L_{\varphi_k}(z,a) T(z) = \vert S_{\varphi_k}(z,a)\vert^2,
		\quad z\in\partial\Om, a\in\Om
		\]
		and this gives
		\[
		\Delta \arg S_{\varphi_k}(\cdot, a)L_{\varphi_k}(\cdot, a) = -\Delta \arg T = 2\pi (n-2)
		\]
		Since $L_{\varphi_k}(z,a)$ has a simple pole at $z=a$, the argument principle implies that the combined number of zeros of $S_{\varphi_k}(\cdot,a)$ and $L_{\varphi_k}(\cdot,a)$ in $\Om$ equals $n-1$, counting multiplicity, for all $a\in W$. The function
		\[
		N_k(a) = \int_{\partial \Om}\frac{{S_{\varphi_k}}'(z,a)}{S_{\varphi_k}(z,a)} dz
		\]
		gives total number of zeros of $S_{\varphi_k}(\cdot,a)$ in $\Om$. The function $N_k$ converges uniformly for $a\in W$ to the function
		\[
		N(a) =  \int_{\partial \Om}\frac{S'(z,a)}{S(z,a)} dz
		\]
		that gives the number of zeros of $S(\cdot,a)$ in $\Om$, counting multiplicities, which is $n-1$. Therefore, eventually $N_k$ are constant functions. Choose $k_0\geq k_1$ such that $N_k(a) = n-1$ for all $k\geq k_0$ and $a\in W$. Thus, $S_{\varphi_k}(\cdot,a)$ has $n-1$ zeros in $\Om$ and does not vanish on $\partial\Om$ for all $k\geq k_0$ and $a\in W$. Since the combined number of zeros of $S_{\varphi_k}(\cdot,a)$ and $L_{\varphi_k}(\cdot,a)$ equals $n-1$, the function $L_{\varphi_k}(\cdot,a)$ does not vanish on $\overline{\Om}$ for all $k\geq k_0$ and $a\in W$.
		
		\medskip
		
		We know that
		\[
		\lim_{k\rightarrow\infty} S_{\varphi_k}(z,w) = S(z,w)
		\quad \text{and}\quad
		\lim_{k\rightarrow\infty} L_{\varphi_k}(z,w) = L(z,w)
		\]
		locally uniformly on $(\ov{\Om}\times\ov{\Om})\setminus \{(z,z): z\in\partial\Om\}$ and $(\ov{\Om}\times\ov{\Om})\setminus \{(z,z): z\in \overline{\Om}\}\}$ respectively. Therefore,
		\[
		\lim_{k\rightarrow\infty} F_k(z,a) = F(z,a)
		\]
		uniformly for $a\in W$ and $z\in\partial\Om$. Since $F_k(\cdot,a),F(\cdot,a)\in A^{\infty}(\Om)$ for all $k\geq k_0$ and $a\in W$, it follows by the maximum modulus principle that 
		\begin{eqnarray*}
			\sup\{ \vert F_k(z,a) - F(z,a)\vert : z \in \overline{\Om}, a\in W\} 
			&=&
			\sup\{ \vert F_k(z,a) - F(z,a)\vert : z \in \partial\Om, a\in W\}.
		\end{eqnarray*}
		Thus proves that $\lim_{k\rightarrow\infty} F_k(z,a) = F(z,a)$ uniformly for $a\in W$ and $z\in\overline{\Om}$.
	\end{proof}

    \section{Explicit Formulas for a class of weights}
	
	We start with two examples due to Nehari and Bell, giving more details.
	
	\begin{exam}[Nehari \cite{Nehari}]
		Let $\Omega\subset\mathbb{C}$ be a bounded $n$-connected domain with $C^{\infty}$ smooth boundary such that no boundary curve is an isolated point. For $a\in\Om$, let $a_1,\ldots a_r$ be the zeros of $S(z,a)$ with multiplicity $n_1,\ldots n_r$, respectively. For $1\leq k\leq r$ and $1\leq m_i\leq n_i$, let
		\[
		\varphi(z) = \vert (z - a_1)^{m_1} \ldots (z-a_k)^{m_k} \vert^2.
		\]
		Given $g\in H^2(\partial\Omega)$,
		\begin{eqnarray*}
			g(a) 
			&=&
			\frac{1}{\prod_{i=1}^k (a-a_i)^{m_i}} \, \left( g(a) \prod_{i=1}^k (a-a_i)^{m_i} \right)
			\\&=&
			\frac{1}{\prod_{i=1}^k (a-a_i)^{m_i}} 
			\int_{\partial\Om} g(\zeta) \prod_{i=1}^k (\zeta-a_i)^{m_i} \,\overline{S(\zeta,a)} \,ds
			\\&=&
			\frac{1}{\prod_{i=1}^k (a-a_i)^{m_i}} 
			\int_{\partial\Om} g(\zeta) \prod_{i=1}^k (\zeta-a_i)^{m_i} \,\overline{S(\zeta,a)} \frac{\varphi(\zeta)}{\prod_{i=1}^k \vert \zeta - a_i\vert^{2 m_i}}\,ds
			\\&=&
			\int_{\partial\Om} g(\zeta) \,\frac{\overline{S(\zeta,a)}}{\prod_{i=1}^k (\overline{\zeta} - \overline{a_i})^{m_i}(a-a_i)^{m_i}} \,\varphi(\zeta)\,ds.
		\end{eqnarray*}
		Therefore, by the uniqueness of the weighted Szeg\H{o} kernel function $S_{\varphi}(\cdot, a)$, we have
		\[
		S_{\varphi}(z,a) = \frac{S(z,a)}{\prod_{i=1}^k (z-a_i)^{m_i} (\overline{a}-\overline{a_i})^{m_i}}.
		\]
		Now for $z\in\partial\Omega$,
		\begin{eqnarray*}
			L_{\varphi}(z, a) 
			&=&
			i \varphi(z) \overline{T(z)} \overline{S_{\varphi}(z,a)} 
			\\&=&
			i \prod_{i=1}^{k} \vert z-a_i\vert^{2m_i} \,\overline{T(z)}  \frac{\overline{S(z,a)}}{\prod_{i=1}^k (\overline{z}-\overline{a_i})^{m_i} (a-a_i)^{m_i}}
			\\&=&
			\prod_{i=1}^{k}\frac{(z-a_i)^{m_i}}{(a-a_i)^{m_i}} \,L(z,a).
		\end{eqnarray*}
		The identity principle implies that
		\[
		L_{\varphi}(z, a) = \prod_{i=1}^{k}\frac{(z-a_i)^{m_i}}{(a-a_i)^{m_i}} \,L(z,a), \quad z\in\overline{\Om}.
		\]
		So, the weighted Garabedian kernel function $L_{\varphi}(\cdot,a)$ does not vanish on $\partial\Om$ and has zeros $a_1,\ldots, a_k$ in $\Om$ with multiplicity $m_1,\ldots, m_k$ respectively. 
		
		\medskip
		
		The weighted Ahlfors map $F_{\varphi}$ with respect to $a\in\Om$, is given by
		\[
		F_{\varphi}(z) = \frac{S_{\varphi}(z,a)}{L_{\varphi}(z,a)} 
		=
		\prod_{i=1}^k\frac{(a-a_i)^{2m_i}}{\vert a - a_i\vert^{2m_i}} \frac{1}{(z-a_i)^{2m_i}} 
		\frac{S(z,a)}{L(z,a)}.
		\]
		Thus, the weighted Ahlfors map $F_{\varphi}$ and the classical Ahlfors map $F$ with respect to $a\in \Om$ are related by
		\[
		F_{\varphi}(z) = \prod_{i=1}^k\frac{(a-a_i)^{2m_i}}{\vert a - a_i\vert^{2m_i}} \frac{1}{(z-a_i)^{2m_i}} 
		F(z).
		\] 
		If $2m_i > n_i$, then the function $F_{\varphi}$ has a pole at $a_i$ of order $2m_i-n_i$. If $2m_i\leq n_i$ for each $i$, then $F_{\varphi}\in \mathcal{B}_{\varphi}$.
		
		\medskip
		
		The function $F_{\varphi}$ maps the $n$ boundary curves $\gamma_i$ of $\Om$ onto $n$ Jordan closed curves $\Gamma_i$. Some of the images $\Gamma_i$ may coincide, for example all of them coincide if $\varphi\equiv 1$. The image of $\Om$ under $F_{\varphi}$ is open and connected and the boundary is the union of $\Gamma_i$. Since $F_{\varphi}$ has a pole, $F_{\varphi}(\Om)$ is the connected component of 
		\[
		(\mathbb{C}\cup\{\infty\} )\setminus \cup_{i=1}^n\Gamma_i
		\]
		that contains $\infty$.
	\end{exam}
	
	\begin{exam}[Bell \cite{Be1}]
		Let $\Omega\subset\mathbb{C}$ be a bounded $n$-connected domain with $C^{\infty}$ smooth boundary such that no boundary curve is an isolated point. Let $p(a,z)$ denote the Poisson kernel of $\Om$. For $A_0\in\Om$, let 
		\[
		\varphi(z) = p(A_0,z).
		\]
		Note that the weight $\varphi$ is well-defined for simply connected domains as well. For $g\in H^2(\partial\Om)$,
		\[
		g(A_0) = \int_{\partial\Om} g(\zeta) \varphi(\zeta) ds,
		\]
		by the reproducing property of the Poisson kernel. Therefore,
		\[
		S_{\varphi}(z,A_0) \equiv 1.
		\]
		Assume that $\Om$ is simply connected. Then,
		\[
		p(A_0,z) = \frac{\vert S(z,A_0)\vert^2}{S(A_0,A_0)}.
		\]
		Therefore, for $z\in\partial\Om$,
		\[
		L_{\varphi}(z,A_0) 
		=
		i \, \varphi(z) \, \overline{S_{\varphi}(z,A_0)} \, \overline{T(z)}
		=
		i \, \frac{\vert S(z,A_0)\vert^2}{S(A_0,A_0)} \, \overline{T(z)} = \frac{L(z,A_0) S(z,A_0)}{S(A_0,A_0)}.
		\]
		By the Identity principle,
		\[
		L_{\varphi}(z,A_0)  = \frac{L(z,A_0) \, S(z,A_0)}{S(A_0,A_0)}, \quad z\in\overline{\Om}.
		\]
		The weighted Ahlfors map $F_{\varphi}$ with respect to $A_0\in\Om$, is given by
		\[
		F_{\varphi}(z) = \frac{S(A_0,A_0)}{L(z,A_0) \, S(z,A_0)}.
		\]
		Note that the weighted Ahlfors map $F_{\varphi}\in \mathcal{B}_{\varphi}$ since $\Om$ is simply connected.
	\end{exam}
	
	\begin{exam}[see \cite{Nehari, zynda}]\label{mainexam}
		Let $\Omega\subset\mathbb{C}$ be a bounded $n$-connected domain with $C^{\infty}$ smooth boundary such that no boundary curve is an isolated point. Let 
		\[
		\varphi(z) = \vert f(z)\vert^2,
		\]
		where $f\in A^{\infty}(\Omega)$ is a non-vanishing function on $\overline{\Om}$.  Then
		\[
		S_{\varphi}(z,w) = \frac{1}{f(z)\overline{f(w)}} S(z,w)
		\quad\text{and}\quad
		L_{\varphi}(z,w)  = \frac{f(z)}{f(w)} L(z,w).
		\]
		Given $g\in H^2(\partial\Omega)$ and $a\in\Om$,
		\begin{eqnarray*}
			g(a) 
			&=&
			\frac{1}{f(a)} (gf)(a)
			=
			\frac{1}{f(a)} \int_{\partial\Om} (gf)(\zeta) \,\overline{S(\zeta,a)} \,ds
			=
			\frac{1}{f(a)} \int_{\partial\Om}(gf)(\zeta) \, \frac{\overline{S(\zeta,a)}}{\vert f(\zeta)\vert^2}\, \vert f(\zeta)\vert^2\, ds
			\\&=&
			\int_{\partial\Om}g(\zeta) \, \frac{\overline{S(\zeta,a)}}{\overline{f(\zeta)}f(a)}\, \varphi(\zeta)\, ds.
		\end{eqnarray*}
		Therefore, by the uniqueness of the weighted Szeg\H{o} kernel with respect to weight $\varphi$, we have
		\[
		S_{\varphi}(z,w) = \frac{1}{f(z)\overline{f(w)}} S(z,w),\quad z,w\in\Om.
		\]
		Now, for $z\in\partial\Om$ and $w\in\Om$,
		\[
		L_{\varphi}(z,w) 
		=
		i \,\vert f(z)\vert^2\, \overline{T(z)}\, \overline{S_{\varphi}(z,w)} 
		=
		i \,\vert f(z)\vert^2\, \overline{T(z)}\, \frac{1}{f(w)\overline{f(z)}}\,\overline{S(z,w)} 
		=
		\frac{f(z)}{f(w)} L(z,w).
		\]	
		The identity principle implies that
		\[
		L_{\varphi}(z,w)  = \frac{f(z)}{f(w)} L(z,w), 
		\quad z,w\in\Om.
		\]
		Thus, the weighted Szeg\H{o} kernel and the weighted Garabedian kernel inherit many nice properties of the classical Szeg\H{o} kernel and the Garabedian kernel, respectively.
		
		\medskip
		
		The weighted Ahlfors map $F_{\varphi}$ with respect to a point $a\in \Om$, is given by
		\[
		F_{\varphi}(z) = \frac{S_{\varphi}(z,a)}{L_{\varphi}(z,a)} 
		=
		\frac{f(a)^2}{\vert f(a)\vert^2} \, \frac{1}{f(z)^2} \frac{S(z,a)}{L(z,a)}.
		\]
		Thus, the weighted Ahlfors map $F_{\varphi}$ and the classical Ahlfors map $F$ with respect to a point $a\in\Om$, are related by
		\[
		F_{\varphi}(z) = \frac{f(a)^2}{\vert f(a)\vert^2} \, \frac{1}{f(z)^2} F(z).
		\]
		Here, $F_{\varphi}\in \mathcal{B}_{\varphi}$ for every $a\in\Om$, just as in the classical case.
		
		\medskip
		
		The function $F_{\varphi}$ maps the $n$ boundary curves $\gamma_i$ of $\Om$ onto $n$ Jordan closed curves $\Gamma_i$. Some of the images $\Gamma_i$ may coincide, for example all of them coincide if $\varphi\equiv 1$. The image of $\Om$ under $F_{\varphi}$ is open and connected and the boundary is the union of $\Gamma_i$. To see the image, $F_{\varphi}(\Om)$ is the connected component of 
		\[
		(\mathbb{C}\cup\{\infty\} )\setminus \cup_{i=1}^n\Gamma_i
		\]
		that contains $0$. The image is bounded as $F_{\varphi}$ has no poles.
	\end{exam}
	
	\begin{rem}
		Let $\Omega\subset\mathbb{C}$ be a bounded $n$-connected domain with $C^{\infty}$ smooth boundary such that no boundary curve is an isolated point. Let $\varphi$ be a positive real-valued $C^{\infty}$ smooth function on $\partial\Om$. It is known $($see \cite{UM, U-S}$)$ that the weighted Garabedian kernel $L_{\varphi}(z,w)$ does not vanish on $\overline{\Om}$ for all $w\in \Om$ if and only if there exists a non-vanishing function $f\in A^{\infty}(\Om)$ such that 
		\[
		L_{\varphi}(z,w) = L_{\vert f\vert^2}(z,w).
		\]
	\end{rem}
	
	\begin{exam}
		Let $\Omega\subset\mathbb{C}$ be a bounded $n$-connected domain with $C^{\infty}$ smooth boundary such that no boundary curve is an isolated point. For $A_0\in\Om$, let 
		\[
		\varphi(z) = \vert S(z,A_0)\vert^2.
		\]
		Given $g\in H^2(\partial\Omega)$,
		\begin{eqnarray*}
			g(A_0) 
			&=&
			\frac{1}{S(A_0,A_0)} \, g(A_0) \, S(A_0,A_0)
			=
			\frac{1}{S(A_0,A_0)} \int_{\partial\Om} g(\zeta) \, S(\zeta,A_0) \,\overline{S(\zeta,A_0)} \,ds
			\\&=&
			\frac{1}{S(A_0,A_0)} \int_{\partial\Om} g(\zeta) \,\varphi(\zeta) \,ds.
		\end{eqnarray*}
		Therefore, by the uniqueness of the weighted Szeg\H{o} kernel with respect to weight $\varphi$, we have
		\[
		S_{\varphi}(z,A_0) \equiv \frac{1}{S(A_0,A_0)}.
		\]
		Now, for $z\in\partial\Om$,
		\[
		L_{\varphi}(z,w) 
		=
		i \,\vert S(z,A_0)\vert^2\, \overline{T(z)}\, \overline{S_{\varphi}(z,w)} 
		=
		\frac{L(z,A_0)\,S(z,A_0)}{S(A_0,A_0)}.
		\]	
		The identity principle implies that
		\[
		L_{\varphi}(z,A_0)  = 	\frac{L(z,A_0)\,S(z,A_0)}{S(A_0,A_0)}, 
		\quad z\in\ov{\Om}.
		\]
		The weighted Ahlfors map $F_{\varphi}$ and the classical Ahlfors map $F$ with respect to $A_0\in \Om$, are related by
		\[
		F_{\varphi}(z) = \frac{1}{L(z,A_0)\,S(z,A_0)} = \frac{1}{S(z,A_0)^2} F(z).
		\]
        Here, $F_{\varphi}\in \mathcal{B}_{\varphi}$ if and only if $\Om$ is simply connected.
	\end{exam}
	
	\begin{exam}
		Let $\Omega\subset\mathbb{C}$ be a bounded $n$-connected domain with $C^{\infty}$ smooth boundary such that no boundary curve is an isolated point. For $A_0\in\Om$, let $B_0$ be a zero of $S(\cdot,A_0)$. Let 
		\[
		\varphi(z) = \frac{1}{\vert L(z,B_0)\vert^{2}}.
		\]
		Given $g\in H^2(\partial\Omega)$,
		\begin{eqnarray*}
			g(A_0) 
			&=&
			\frac{g(A_0)}{L(A_0,B_0)} \, L(A_0,B_0)
			=
			L(A_0,B_0) \int_{\partial\Om} \frac{g(\zeta)}{L(\zeta,B_0)} \,\overline{S(\zeta,A_0)} \,ds
			\\&=&
			L(A_0,B_0) \int_{\partial\Om}  \frac{g(\zeta)}{L(\zeta,B_0)} \,\overline{S(\zeta,A_0)} \,\vert L(\zeta,B_0)\vert^{2}\,\varphi(\zeta) \,ds
			\\&=&
			L(A_0,B_0) \int_{\partial\Om}  g(\zeta) \,\overline{S(\zeta,A_0)\,L(\zeta,B_0)} \,ds.
		\end{eqnarray*}
		Since $S(B_0,A_0) = 0$, the function $S(\cdot,A_0)\,L(\cdot,B_0) \in H^2(\partial\Om)$.
		Therefore, by the uniqueness of the weighted Szeg\H{o} kernel with respect to weight $\varphi$, we have
		\[
		S_{\varphi}(z,A_0) = S(z,A_0)\,L(z,B_0)\,\overline{L(A_0,B_0)}.
		\]
		Now, for $z\in\partial\Om$,
		\begin{eqnarray*}
			L_{\varphi}(z,A_0) 
			&=&
			i \, \frac{1}{\vert L(z,B_0)\vert^2} \, \overline{T(z)}\, \overline{S_{\varphi}(z,A_0)} 
			\\&=&
			i \, \frac{1}{\vert L(z,B_0)\vert^2} \, \overline{T(z)}\, \overline{S(z,A_0)}\,\overline{L(z,B_0)}\,L(A_0,B_0)
			\\&=&
			\frac{L(z,A_0)}{L(z,B_0)} \,L(A_0,B_0).
		\end{eqnarray*}
		The identity principle implies that
		\[
		L_{\varphi}(z,A_0)  = 	\frac{L(z,A_0)}{L(z,B_0)} \,L(A_0,B_0),
		\quad z\in\ov{\Om}.
		\]
		
		\medskip
		
		The weighted Ahlfors map $F_{\varphi}$ and the classical Ahlfors map $F$ with respect to $A_0\in \Om$, are related by
		\[
		F_{\varphi}(z) = \frac{\overline{L(A_0,B_0)}}{L(A_0,B_0)} \, L(z,B_0)^2 \, F(z).
		\]
		The weighted Ahlfors map $F_{\varphi}\in \mathcal{B}_{\varphi}$ if $S(z,A_0)$ vanishes at $z=B_0$ with multiplicity at least $2$. Otherwise, $F_{\varphi}$ has a simple pole at $z=B_0$.
	\end{exam}
	
	\begin{rem}
		\begin{enumerate}
			\item We do not know an example of a weight $\varphi$ for which $L_{\varphi}(z,a)$ or $S_{\varphi}(z,a)$ vanishes for some $z\in\partial\Om$ and $a\in\Om$. The problem is to find an example of such a weight $\varphi$ or prove that such a weight cannot exist.
			\item We do not know what happens when $\varphi(z) = \vert q(z)\vert^2$ where $q$ is an analytic function with at least one pole in $\Om$ extending $C^{\infty}$ smoothly to the boundary $\partial\Omega$.
		\end{enumerate}
	\end{rem}
	
	
	\section{Nehari's Construction of the weighted Szeg\H{o} and weighted Garabedian kernel}

As before, let $\Om\subset\mathbb{C}$ be a bounded $n$-connected domain with $C^{\infty}$ smooth boundary curves $\gamma_i$ and no isolated boundary point. Let $\varphi$ be a positive $C^{\infty}$ smooth function on $\partial\Om$. Theorem $1$ in \cite{Nehari} shows that there exists a meromorphic function $q$ on $\Omega$ that extends $C^{\infty}$-smoothly to $\pa \Omega$ such that the total number of zeros and poles counting multiplicities does not exceed $n-1$, and $\vert q(z) \vert^2 = \varphi(z)$ on $\pa \Om$. Some auxiliary functions arising in the proof of this theorem are of independent interest since they can be identified as the Szeg\H{o} kernel of certain closed subspaces of $H^2(\pa \Omega)$. To make all this clear, we give an exposition of Theorem $1$ in \cite{Nehari} for the sake of completeness and more importantly, to bring out the relevance and definition of the aforementioned auxiliary functions.

\medskip

For $a \in \Om$, recall that the Green's function $G(z, a)$ with pole at $z=a$ is defined as
\[
G(z,a) = -\log\vert z-a\vert + u_a(z)
\]
where $u_a(z)$ is the harmonic function of $z$ on $\Om$ that solves the Dirichlet problem with boundary data equal to $\log \vert z-a\vert$. Consider the harmonic function
\[
u(z) = -\frac{1}{4\pi} \int_{\partial\Om} \log \varphi(t) \frac{\partial G(z,t)}{\partial n}\, ds
\]
which has boundary values $2^{-1}\log \varphi(t)$. Let $2\pi p_{\nu}$, $\nu = 1,\ldots,n$, denote the periods of the harmonic conjugate $v$ of $u$. The function
\begin{equation}\label{Uform}
U(z) = u(z) + \sum_{\mu=1}^{n-1}\epsilon_{\mu} G(z,z_{\mu}),
\quad \quad z_{\mu}\in D, \mu =1,\ldots,n-1,
\end{equation}
where $\epsilon_{\mu}$ is either $+1$ or $-1$, has the same boundary values as $u(z)$. The period of $G(z,\z_{\mu})$ about $\gamma_{\nu}$ is
\[
\int_{\gamma_{\nu}} \frac{\partial G(z,z_{\mu})}{\partial n} ds 
=
(-2\pi) \frac{-1}{2\pi} \int_{\gamma_{\nu}} \frac{\partial G(z,z_{\mu})}{\partial n} ds
=
(-2\pi) \,\omega_{\nu}(z_{\mu})
\]
where $\omega_{\nu}$ denotes the harmonic function on $\Omega$ that takes the boundary values $\delta_{\nu\mu}$ on $\gamma_{\mu}$. The period $2\pi P_{\nu}$ of the harmonic conjugate $V$ of $U$ about $\gamma_{\nu}$ is given by
\[
P_{\nu} = p_{\nu} - \sum_{\mu=1}^{n-1} \epsilon_{\mu} \omega_{\nu}(z_{\mu}), 
\quad \quad \nu =1,\ldots,n.
\]
We want to choose $z_{\mu}$ and $\epsilon_{\mu}$ in such a way that $P_{\nu}$ are integers. Since $u$ is harmonic, $p_1+\ldots+p_n = 0$. Also, $\omega_1+\ldots+\omega_n \equiv 1$. So, 
\[
P_n = -\sum_{\nu=1}^{n-1} \left(p_{\nu} 
-
\sum_{\mu=1}^{n-1}\epsilon_{\mu}\omega_{\nu}(z_{\mu})\right) -\sum_{\mu=1}^{n-1}\epsilon_{\mu}
\]
and therefore, it is enough to make sure that $P_{\nu}$ are integers for $\nu=1,\ldots,n-1$. That is, we have to find $n-1$ points $z_1,\ldots,z_{n-1}$ in $\Om$ for which 
\begin{equation}\label{Ne1}
	\sum_{\mu=1}^{n-1}\epsilon_{\mu}\omega_{\nu}(z_{\mu}) = p_{\nu} + m_{\nu},
	\quad\quad \nu =1,\ldots, n-1
\end{equation}
holds for appropriately chosen integers $m_{\nu}$ and for a suitable choice of $\epsilon_{\mu}$. A schlicht (univalent) conformal mapping of $\Om$ transforms the harmonic measures of $\Om$ into the harmonic measures of its conformal image. Therefore, in order to solve (\ref{Ne1}), we can assume without loss of generality, that $\gamma_n$ is the real axis and $\Om$ is contained in the upper half plane. Since the harmonic functions $\omega_{\nu}$ vanish on the real axis, they can be analytically continued beyond the real axis by the identity $\omega_{\nu}(\bar{z}) = - \omega_{\nu}(z)$. The functions $\omega_{\nu}$ are thus harmonic in the entire domain $\Omega' = \Omega \cup \mathbb{R} \cup \tilde{\Om}$, where $\tilde{\Om}$ is the mirror image of $\Om$ with respect to the real axis. Since $\omega_{\nu}(\bar{z}) = - \omega_{\nu}(z)$, the system of equations (\ref{Ne1}) is equivalent to the system
\begin{equation}\label{Ne0}
	\sum_{\mu=1}^{n-1} \omega_{\nu}(z_{\mu}) = p_{\nu} + m_{\nu}, 
	\quad \quad \nu = 1,\ldots,n-1,
\end{equation}
if $z_{\mu}$ are not restricted to $\Om$ and can take any value in $\Omega'$. Now, consider the expression
\begin{equation}\label{Ne2}
	R = R(z_1,\ldots,z_{n-1})
	=
	\sum_{\nu =1}^{n-1}\left[\sum_{\mu=1}^{n-1} \omega_{\nu}(z_{\mu}) - p_{\nu} - m_{\nu}\right]^2.
\end{equation}
The function $R(z_1,\ldots,z_{n-1})$ is continuous with variables coming from $\Omega' + \Gamma'$ where $\Gamma'$ denotes the boundary of $\Omega'$. Thus, for a definite choice of integers $m_{\nu}$, $R$ has a non-negative minimum, which is attained for a set of $n-1$ points $z_{\mu}$ in $\Omega' + \Gamma'$. We now choose $m_{\nu}$ in such a way as to give this minimum its smallest possible value, which is possible because $-1\leq \omega_{\nu}(z)\leq 1$. If $m_1,\ldots,m_{n-1}$ are a choice of integers which give the smallest minimum of $R$, we write $p_{\nu} + m_{\nu} = \alpha_{\nu}$, and consider the solution of
\[
R = R(z_1,\ldots,z_{n-1}) 
=
\sum_{\nu =1}^{n-1}\left[\sum_{\mu=1}^{n-1} \omega_{\nu}(z_{\mu}) - \alpha_{\nu} \right]^2 = min,
\]
where $z_{\mu} \in \Omega'+\Gamma'$. Suppose $z_1\in \gamma_k$ ($k\neq n$). Since $\omega_k(z_1) = 1$, $\omega_{\nu}(z_1) = 0$ $(\nu\neq k)$, and $\omega_{\nu}(z)$ ($\nu =1,\ldots,n-1$) vanishes on the real axis, (\ref{Ne2}) will remain same if $z_1$ is transferred to the real axis and $m_k$ is replaced by $m_k-1$. Similarly, a point $z_{\mu}$ may be transferred from the mirror image of $\gamma_k$ to the real axis keeping (\ref{Ne2}) unchanged by replacing $m_k$ with $m_k + 1$. Therefore, there exists a minimizing set $S = \{z_{\mu}\}$ lying entirely in $\Om'$. Similarly, we can assume that $S$ does not contain both a point $z_{\mu}$ and its conjugate $\bar{z}_{\mu}$, because they can be replaced by points on the real axis without altering $(\ref{Ne2})$. Since $z_{\mu}$ minimize (\ref{Ne2}), we have
\[
0 =  \sum_{\nu =1}^{n-1}\left[\sum_{\mu=1}^{n-1} \omega_{\nu}(z_{\mu})-\alpha_{\nu}\right] {w'_{\nu}}(z_{\mu}) 
=
\sum_{\nu =1}^{n-1} a_{\nu} {w'_{\nu}}(z_{\mu}),
\quad \quad
a_{\nu} = \sum_{\mu=1}^{n-1} \omega_{\nu}(z_{\mu})-\alpha_{\nu}
\]
where $w_{\nu}(z)$ is (locally defined) holomorphic function whose real part is $\omega_{\nu}(z)$. Note that the derivative of $w_{\nu}$ is globally defined and equals $(\partial/\partial z) \omega_{\nu}$. Now, either all $a_{\nu}$ vanish or the derivative of the (locally defined) holomorphic function
\[
w(z) = \sum_{\nu=1}^{n-1} a_{\nu} w_{\nu}(z)
\] 
vanishes at $n-1$ points $z_1,\ldots,z_{n-1}$. In the first case, we have solved (\ref{Ne0}). So, assume that at least one of the constants $a_{\nu}$ is non-zero. The points of $S$ are critical points of $w(z)$. Since $a_{\nu}$ are real, we have ${w'_{\nu}}(\bar{z}) = - \overline{{w'_{\nu}}(z)}$. So, the set $S^*$ of points conjugate to those of $S$ is also a set of critical points of $w(z)$. If no point of $S$ is on the real axis, the sets $S$ and $S^*$ are disjoint and we thus have $2(n-1)$ critical points of $w(z)$, and $n-1$ of these points are in $\Om$. It is known that
\[
\overline{i \,\frac{\partial \omega_{\nu}}{\partial z}\, T(z)} = i \,\frac{\partial \omega_{\nu}}{\partial z} \,T(z)
\]
on $\partial\Omega$. Therefore, $i w'(z) T(z)$ is real on $\partial\Om$. By the argument principle, we have
\[
\text{no. of zeros of } w'(z) \text{ in } \Om = \frac{1}{2\pi} \Delta \arg w'(z) 
=
\frac{1}{2\pi} \left(\Delta\arg (i w'(z) T(z)) - \Delta\arg T(z)\right) = n-2,
\]
which is a contradiction. So, assume that one or more points of $S$ lie on the real axis. Note that $R$ is constant on the real axis. If $z_{\mu}$ is a point of $S$ lying on the real axis, then it can be replaced with any other point on the real axis without altering the value of the minimum. So, all the points of the real axis are critical points of $w(z)$. This means that
\[
\sum_{\nu=1}^{n-1} a_{\nu} {w'_{\nu}}(z) \equiv 0
\]
on the real axis, and by analytic continuation, throughout $\Om'$. But this is possible only if all $a_{\nu}$ vanish because ${w'_{\nu}}(z)$ are linearly independent. This leads to a contradiction and shows the solvability of (\ref{Ne0}).

\medskip

Therefore, the periods of the harmonic conjugate $V$ of $U$ are integral multiples of $2\pi$. Thus,
\[
q(z) = \exp\{U(z) + i V(z)\}
\]
is a single-valued meromorphic function on $\Om$ with simple zeros or simple poles at $z_{\mu}$, depending on whether $\epsilon_{\mu}$ is equal to $+1$ or $-1$. The number of zeros and poles of $q(z)$ counting multiplicity, does not exceed $n-1$ , and
\[
\vert q(z)\vert ^2 = \exp(2\,U(z)) = \exp\left(\log \varphi(z)\right) = \varphi(z),
\quad \quad z\in \partial\Omega.
\]
Note that if two of the points $z_{\mu}$ coincide but the respective $\epsilon_{\mu}$ are opposite in sign then the corresponding terms involving Green's function in (\ref{Uform}) get cancelled and the total number of zeros and poles of $q$ counting multiplicity will be less than $n-1$.

\medskip

Now, for $\zeta\in\Om$, consider the functions
\begin{equation}\label{Neq1}
	S^{(1)}(z,\zeta) = S(z,\zeta) + \sum_{\nu=1}^\sigma c_{\nu} S(z,z_{\nu}) + \sum_{\nu = \sigma +1}^{n-1} c_{\nu} L(z,z_{\nu}),
	\quad\quad \sigma\leq n-1,
\end{equation}
\begin{equation}\label{Neq2}
	L^{(1)}(z,\zeta) = L(z,\zeta) + \sum_{\nu=1}^\sigma \bar{c}_{\nu} L(z,z_{\nu}) + \sum_{\nu = \sigma +1}^{n-1} \bar{c}_{\nu} S(z,z_{\nu})
\end{equation}
where $z_1,\ldots,z_{\sigma}$ are the zeros of $q(z)$ and $z_{\sigma+1},\ldots,z_{n-1}$ are the poles of $q(z)$. The $n-1$ constants $c_{\nu}$ shall be determined by
\begin{equation}\label{Neq3}
	S^{(1)}(z_{\mu},\zeta) = 0, \quad \mu =1,\ldots, \sigma;
	\quad\quad
	L^{(1)}(z_{\mu}, \zeta) = 0, \quad \mu = \sigma+1,\ldots,n-1.
\end{equation}
This will be possible if the system of $n-1$ linear equations
\begin{multline}\label{Ne3}
	\sum_{\nu=1}^\sigma c_{\nu} S(z_{\mu},z_{\nu}) + \sum_{\nu = \sigma +1}^{n-1} c_{\nu} L(z_{\mu},z_{\nu})
	=
	-S(z_{\mu},\zeta),
	\quad\quad \mu = 1,\ldots,\sigma,
	\\
	\sum_{\nu=1}^\sigma c_{\nu} \overline{L(z_{\mu},z_{\nu})} + \sum_{\nu = \sigma +1}^{n-1} c_{\nu} \overline{S(z_{\mu},z_{\nu})} 
	=
	-\overline{L(z_{\mu},\zeta)},
	\quad\quad \mu = \sigma+1,\ldots,n-1,
\end{multline}
for the $n-1$ unknowns $c_1,\ldots,c_{n-1}$ has a solution. We may assume that not all the numbers $S(z_{\mu}, \zeta)$ $(\mu =1,\ldots,\sigma)$, $L(z_{\mu},\zeta)$ $(\mu = \sigma+1,\ldots,n-1)$ are zero. It is enough to show that the associated homogeneous system in $(\ref{Ne3})$ does not possess a non-trivial solution. On the contrary, if it had a non-trivial solution, there would exist constants $d_{\nu}$ $(\nu=1,\ldots, n)$, not all of which are zero, such that
\[
S^{(2)}(z_{\mu}) = 0,\quad \mu =1,\ldots, \sigma;\quad\quad
L^{(2)}(z_{\mu}) = 0, \quad \mu = \sigma+1,\ldots,n-1,
\]
where
\[
S^{(2)}(z) = \sum_{\nu =1}^{\sigma} d_{\nu} S(z,z_{\nu}) + \sum_{\nu = \sigma+1}^{n-1} d_{\nu} L(z,z_{\nu}),
\quad
L^{(2)}(z) = \sum_{\nu=1}^{\sigma} \bar{d}_{\nu} L(z,z_{\nu}) + \sum_{\nu=\sigma +1}^{n-1} \bar{d}_{\nu} S(z,z_{\nu}).
\]
If two of the points $z_{\nu}$ coincide, at the point $z_{\tau}$, $\tau\leq \sigma$, then (\ref{Neq1}) and (\ref{Neq2}) will each have an extra term involving $\partial S(z,z_{\tau})/\partial \bar{z}_{\tau}$ and $\partial L(z,z_{\tau})/\partial z_{\tau}$, respectively. The condition in (\ref{Neq3}) which is lost by the two points coinciding is replaced by the condition $\partial S^{(1)}(z_{\tau}, \zeta)/\partial z_{\tau} = 0$. If $\tau > \sigma$, then the roles of the functions $S$ and $L$ are interchanged in modifying (\ref{Neq1}), (\ref{Neq2}) and the condition (\ref{Neq3}). Coincidence of more than two points will result in the appearance of higher derivatives in the appropriate places. These changes do not affect the validity of the considerations which follow. We have
\[
\overline{S^{(2)}(z)} = \frac{1}{i} L^{(2)}(z) T(z),
\quad \quad z\in \partial \Om.
\]
Therefore,
\[
\frac{1}{i} S^{(2)}(z)L^{(2)}(z) T(z) \geq 0,
\quad\quad z\in\partial\Omega,
\]
which implies that
\[
\frac{1}{i} \int_{\partial\Om}S^{(2)}(z)L^{(2)}(z) dz >0. 
\]
By Cauchy's theorem, the function $S^{(2)}(z)L^{(2)}(z)$ cannot be analytic in $\Om$, which is a contradiction. Therefore, we have proved that ($\ref{Ne3}$) has a unique solution. Furthermore,
\begin{equation}\label{S1L1}
\overline{S^{(1)}(z,\zeta)} = \frac{1}{i} L^{(1)}(z,\zeta) T(z),
\quad\quad z\in \partial\Om.
\end{equation}
The weighted Szeg\H{o} and Garabedian kernels are given by
\begin{equation}
	S_{\varphi}(z,\zeta) = \frac{S^{(1)}(z,\zeta)}{q(z)\, \overline{q(\zeta)}},
	\quad\quad
	L_{\varphi}(z,\zeta) = \frac{L^{(1)}(z,\zeta) \,q(z)}{q(\zeta)}.
\end{equation}
If $q(z)$ happens to have a zero or a pole at $z=\zeta$, the above definitions of $S_{\varphi}(z,\zeta)$ and $L_{\varphi}(z,\zeta)$ need to be slightly modified. If $\eta$ is a point close to $\zeta$, we define
\begin{equation}
S_{\varphi}(z,\zeta) = \lim_{\eta\rightarrow \zeta} S_{\varphi}(z,\eta),
\quad\quad 
L_{\varphi}(z,\zeta) = \lim_{\eta\rightarrow\zeta} L_{\varphi}(z,\eta).
\end{equation}
This completes the exposition of Nehari's theorem and yields expressions for the weighted Szeg\H{o} and Garabedian kernels in terms of the auxiliary functions $S^{(1)}(z, \zeta), L^{(1)}(z, \zeta)$. 

\subsubsection{\bf{Observation 1}}\label{SC1}

If $\Omega$ is simply connected $(n=1)$, then $q$ does not have any zero or pole in $\Omega$. That is $q\in A^{\infty}(\Om)$ and is non-vanishing. Therefore, it follows from Example \ref{mainexam} that
\begin{equation}
S_{\varphi}(z,w) = \frac{S(z,w)}{q(z)\,\overline{q(w)}},
\quad\quad \text{and} \quad\quad
L_{\varphi}(z,w) = \frac{L(z,w) \,q(z)}{q(w)}.
\end{equation}
This recovers a result already stated in \cite{Nehari, UM} and gives an explicit expression for the weighted Szeg\H{o} kernel and the weighted Garabedian kernel for any bounded simply connected domain $\Om$, given that we have the Riemann map $F:\Om\rightarrow\mathbb{D}$ and the function $q$ above.

\subsubsection{\bf{Observation 2}}

\medskip

Let $\Om$ be a bounded $n$-connected domain with $C^{\infty}$ smooth boundary and no isolated boundary point. Let $\varphi$ be a positive $C^{\infty}$ smooth function on $\partial\Om$ and $q$ as in Nehari's theorem with $\vert q \vert^2 = \varphi$ on $\pa \Om$.

\medskip

Assume that $q$ does not have any pole in $\Om$, that is, $q\in A^{\infty}(\Om)$. Let $z_1,\ldots,z_{\sigma}$ be the zeros of $q$ with multiplicity $m_1,\ldots,m_{\sigma}$. We have $m_1 + \ldots+m_{\sigma}\leq n-1$. Consider the \textit{effective divisor} $A=\{(z_1,m_1),\ldots,(z_{\sigma},m_{\sigma})\}$ and define 
\begin{equation}
H_{A}^2(\Omega) = \{f\in H^2(\partial\Omega): f(z_{\mu}) = 0 \text{ with multiplicity }m_{\mu}, \,\,\mu = 1,\ldots,\sigma\}
\end{equation}
The space $H_{A}^2(\Omega)$ is a closed subspace of $H^2(\partial\Om)$ and therefore it is a Hilbert space. For $w\in\Om$, the evaluation functionals 
\[
E_w: H_{A}^2(\Om) \ni f \mapsto f(w) \in \mathbb{C}
\]
are continuous and therefore by Riesz Representation theorem, there exists a unique function $S_{A}(\cdot,w)\in H_{A}^2(\Om)$ such that
\begin{equation}
f(w) = \int_{z\in\partial\Om} f(z) \,\overline{S_{A}(z,w)}\, ds.
\end{equation}
The function $S_{A}(z,w)$ is the reproducing kernel of $H_{A}^2(\Om)$. For $f\in H^2_{\varphi}(\partial\Om)$, the function $f.q \in H_{A}(\Om)$. Therefore, for $w\in\Om$,
\[
f(w) q(w) = \int_{z\in\partial\Om}  f(z)\,q(z) \,\overline{S_{A}(z,w)}\,ds 
=
\int_{z\in\partial\Om}  f(z)\,\frac{\overline{S_{A}(z,w)}}{\overline{q(z)}}\,\varphi(z)\,ds.
\]
If $q(w)\neq 0$, it follows by the uniqueness of the weighted Szeg\H{o} kernel that
\begin{equation}
S_{\varphi}(z,w) = \frac{S_{A}(z,w)}{q(z)\,\overline{q(w)}}.
\end{equation}
In the case $q(w) = 0$, if $\eta$ is a point close to $w$, we have
\begin{equation}
S_{\varphi}(z,w) = \lim\limits_{\eta\rightarrow w} S_{\varphi}(z,\eta) = 
 \lim\limits_{\eta\rightarrow w} \frac{S_{A}(z,\eta)}{q(z)\,\overline{q(\eta)}}.
\end{equation}
Thus, the function $S^{(1)}(z,w)$ in Nehari's construction is the function $S_{A}(z,w)$, when $q$ is assumed to have no poles. 

\medskip

Now, let $v\in H_A^2(\Om)^{\perp}$. For $u\in L^2(\partial\Om)$, the function $q\,\mathcal{C}u \in H_A^2(\Omega)$. Therefore,
\[
0=\langle q\,\mathcal{C}u, v\rangle
= \langle \mathcal{C}u, v\bar{q}\rangle
= \langle u, \mathcal{C}^* (v\bar{q})\rangle
\]
This implies that $ \mathcal{C}^* (v\bar{q}) = 0$, that is, $v\bar{q} = \ov{T}\, \overline{\mathcal{C}(\bar{v}q\ov{T})}$. Thus, $v \bar{q} = \overline{TH}$ for some $H\in H^2(\partial\Om)$. Conversely, let $v\in L^2(\partial\Om)$ be such that $v \bar{q} = \overline{TH}$ for some $H\in H^2(\partial\Om)$. Then, for any $u\in H_A^2(\Om)$
\[
\langle u, v \rangle = \left\langle \frac{u}{q}, v\bar{q} \right\rangle 
=  \left\langle \frac{u}{q}, \overline{TH}\right\rangle = 0
\]
because $u/q\in H^2(\partial\Omega)$. Thus, every $v\in H^2_A(\Om)^{\perp}$ satisfies $v\bar{q} = \overline{TH}$ for some $H\in H^2(\partial\Om)$. Let $P_A$ denote the orthogonal projection of $L^2(\partial\Om)$ onto $H^2(\partial\Om)$. The orthogonal decomposition of $u\in L^2(\partial\Om)$ satisfies
\[
u\, \bar{q} = (P_A u) \,\bar{q} + \overline{TH}
\]
where $H = P_A(\bar{u}q\overline{T})$ because $\bar{u}\,q\,\overline{T} = H + \overline{(P_A u)}\, q\, \overline{T}$ is the orthogonal decomposition of $\bar{u}q\overline{T}$.

\medskip

Now, the orthogonal projection of the Cauchy kernel $C_a$ onto $H_A^2(\Om)$ is $S_A(\cdot,a)$ because
\[
P_A C_a (\zeta) = \int_{z\in\partial \Om} C_a(z) S_A(\zeta,z) ds = S_A(\zeta,a)
\]
by the Cauchy integral formula. Therefore, writing the orthogonal decomposition of $C_a$ gives
\[
\overline{\left(\frac{1}{2\pi i} \frac{T(\zeta)}{\zeta - a}\right)}\, \ov{q(\zeta)}
=
C_a(\zeta)\, \ov{q(\zeta)}
= S_A(\zeta,a)\, \ov{q(\zeta)} + \overline{T(\zeta) H_a(\zeta)}
\]
where $H_a\in H^2(\partial\Om)$ and $H_a = P_A(\ov{C_a} q \ov{T})$. We now define
\begin{equation}
L_A(\zeta,a) = i \left( \frac{1}{2\pi i} \frac{1}{\zeta - a} - \frac{H_a(\zeta)}{q(\zeta)}\right).
\end{equation}
By the definition, it follows that for $\zeta\in\partial\Om$,
\begin{eqnarray*}
\overline{S_A(\zeta,a)} = \left( \frac{1}{2\pi i} \frac{1}{\zeta - a} - \frac{H_a(\zeta)}{q(\zeta)} \right) T(\zeta) = \frac{1}{i} L_A(\zeta,a) T(\zeta).
\end{eqnarray*}
That is,
\begin{equation}
\overline{S_A(\zeta,a)} = \frac{1}{i} L_A(\zeta,a) T(\zeta),
\quad \quad \zeta\in\partial\Om, a\in\Om.
\end{equation}
A comparison with (\ref{S1L1}) and analytic continuation implies that the function $L^{(1)}(z,w)$ in Nehari's construction is the function $L_A(z,w)$, when $q$ is assumed to have no poles. Also,
\begin{equation}
L_{\varphi}(z,w) = \frac{L_A(z,w) q(z)}{q(w)}
\end{equation}
when $q(w)\neq 0$. In the case $q(w) = 0$, if $\eta$ is a point close to $w$, we have
\begin{equation}
L_{\varphi}(z,w) = \lim\limits_{\eta\rightarrow w} L_{\varphi}(z,\eta) 
=
\lim\limits_{\eta\rightarrow w}\frac{L_A(z,\eta) q(z)}{q(\eta)}.
\end{equation}
The above arguments are true even if $q$ is assumed to be any function such that $\vert q(z)\vert^2 = \varphi(z)$ on $\partial\Om$, without any restriction on the zeros.
To summarize,

\begin{thm}\label{neh}
Let $\Om$ be a bounded $n$-connected domain with $C^{\infty}$ smooth boundary and no isolated boundary point. Let $\varphi$ be a positive $C^{\infty}$ smooth function on $\partial\Om$ and $q\in A^{\infty}(\Om)$ be any function such that
\[
\vert q(z)\vert^2 = \varphi(z),
\quad\quad z\in\partial\Om.
\]
Let $z_1,\ldots,z_{\sigma}$ be the zeros of $q$ with multiplicity $m_1,\ldots,m_{\sigma}$. Let $A=\{(z_1,m_1),\ldots,(z_{\sigma},m_{\sigma})\}$. Corresponding to the set $A$, let $S_{A}(z,w)$ denote the reproducing kernel of the Hilbert space
\[
H_{A}^2(\Omega) = \{f\in H^2(\partial\Omega): f(z_{\mu}) = 0 \text{ with multiplicity }m_{\mu}, \,\,\mu = 1,\ldots,\sigma\}.
\]
Then, the weighted Szeg\H{o} kernel $S_{\varphi}$ is given by
\[
S_{\varphi}(z,w) = \frac{S_A(z,w)}{q(z)\ov{q(w)}}
\quad \text{ if }w\neq z_\mu,
\quad\quad
S_{\varphi}(z,z_{\mu}) = \lim\limits_{\eta\rightarrow z_{\mu}}  S_{\varphi}(z,\eta).
\]
The orthogonal projection of the Cauchy kernel $C_a$ onto $H_A^2(\Om)$ is $S_A(\cdot,a)$. And, the orthogonal decomposition of $C_a$ satisfies
\[
C_a \bar{q}= S_A(\cdot, a) \bar{q} + \overline{TH_a}, 
\quad\quad \text{for some }H_a\in H^2(\partial\Om).
\]
We define the function $L_A$ by
\[
L_A(\zeta,a) = i \left(\frac{1}{2\pi i}\frac{1}{\zeta - a}- \frac{H_a(\zeta)}{q(\zeta)}\right).
\]
The functions $S_A$ and $L_A$ are related by
\[
\overline{S_A(\zeta,a)} = \frac{1}{i} L_A(\zeta,a) T(\zeta),
\quad \quad \zeta\in\partial\Om, a\in\Om.
\]
The weighted Garabedian kernel $L_{\varphi}$ is given by
\[
L_{\varphi}(z,w) = \frac{L_A(z,w) q(z)}{q(w)}
\quad \text{ if } w\neq z_{\mu},
\quad\quad
L_{\varphi}(z,z_{\mu}) = \lim\limits_{\eta\rightarrow z_{\mu}} L_{\varphi}(z,\eta).
\]
\end{thm}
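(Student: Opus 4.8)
The plan is to reduce everything to the ordinary Szeg\H{o} and Garabedian kernels by identifying the constrained Hardy space as $H_A^2(\Om)=q\,H^2(\partial\Om)$ and then transporting the reproducing and boundary structure through multiplication by $q$. First I would check that multiplication by $q$ carries $H^2_\varphi(\partial\Om)$ into $H_A^2(\Om)$: if $f\in H^2_\varphi(\partial\Om)$ then $fq$ is holomorphic, vanishes at each $z_\mu$ to order at least $m_\mu$ because $q$ does, and $\int_{\pa\Om}|fq|^2\,ds=\int_{\pa\Om}|f|^2\varphi\,ds<\infty$ since $|q|^2=\varphi$ on $\pa\Om$. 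Conversely, if $u\in H_A^2(\Om)$ the vanishing conditions cancel the interior zeros of $q$, so $u/q\in H^2(\partial\Om)$; as $\varphi$ is bounded above and below on $\pa\Om$, this gives $H_A^2(\Om)=q\,H^2(\partial\Om)$ with equivalent norms. Substituting $fq$ into the reproducing identity for $S_A$ and rewriting $q(z)\,ds=\ov{q(z)}^{-1}\varphi(z)\,ds$ on $\pa\Om$, I obtain $f(w)=\langle f,\,S_A(\cdot,w)/(q(\cdot)\ov{q(w)})\rangle_\varphi$ whenever $q(w)\neq 0$; uniqueness of the weighted Szeg\H{o} kernel then forces $S_\varphi(z,w)=S_A(z,w)/(q(z)\ov{q(w)})$, and the smoothness of $S_\varphi$ recovers the values at $w=z_\mu$ as limits.

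Next I would describe $H_A^2(\Om)^\perp$ inside $L^2(\pa\Om)$. Using $H_A^2(\Om)=q\,H^2(\partial\Om)$, a function $v$ is orthogonal to $H_A^2(\Om)$ precisely when $\ov q\,v\perp H^2(\partial\Om)$, hence, by the description of $H^2(\partial\Om)^\perp$ recalled in the introduction, precisely when $\ov q\,v=\ov{TH}$ for some $H\in H^2(\partial\Om)$; the reverse inclusion is immediate from Cauchy's theorem, since for $u\in H_A^2(\Om)$ one has $\langle u,v\rangle=\int_{\pa\Om}(u/q)\,H\,dz=0$ because $u/q$ and $H$ lie in $H^2(\partial\Om)$. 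The reproducing property of $S_A$ together with the Cauchy integral formula shows that for $f\in H_A^2(\Om)$ both $f(a)=\langle f,C_a\rangle$ and $f(a)=\langle f,S_A(\cdot,a)\rangle$ hold, so $C_a-S_A(\cdot,a)\perp H_A^2(\Om)$ and $P_A C_a=S_A(\cdot,a)$. Writing out the orthogonal decomposition of $C_a$ and multiplying by $\ov q$ then yields $C_a\ov q=S_A(\cdot,a)\ov q+\ov{TH_a}$ with $H_a\in H^2(\partial\Om)$.

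With $H_a$ available I would define $L_A$ by the stated formula and read off its boundary relation. On $\pa\Om$, where $q$ is nonvanishing, dividing the decomposition by $\ov q$, inserting $C_a(\zeta)=\ov{(2\pi i)^{-1}T(\zeta)/(\zeta-a)}$, and conjugating gives $\ov{S_A(\zeta,a)}=\tfrac{1}{i}L_A(\zeta,a)T(\zeta)$, exactly the identity that makes $L_A$ the Garabedian partner of $S_A$. Comparing this with the weighted relation $\ov{S_\varphi(\zeta,a)}=(i\varphi)^{-1}L_\varphi(\zeta,a)T(\zeta)$ from the introduction and substituting $S_\varphi=S_A/(q\ov q)$, the factors of $T$ and $\ov q$ cancel and leave $L_\varphi(\zeta,a)=L_A(\zeta,a)\,q(\zeta)/q(a)$ on $\pa\Om$; both sides are holomorphic in $\zeta$ on $\Om$ with a matching simple pole at $\zeta=a$ and extend smoothly to $\pa\Om$, so the identity principle propagates the formula to $\ov\Om$, with $a=z_\mu$ again handled by passing to a limit.

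The step I expect to be the main obstacle is the clean identification $H_A^2(\Om)=q\,H^2(\partial\Om)$ and the attendant description of $H_A^2(\Om)^\perp$: one must ensure that division by $q$ maps $H_A^2(\Om)$ boundedly into $H^2(\partial\Om)$ even though $q$ vanishes in the interior, which hinges on the zeros of $q$ lying inside $\Om$ and being exactly absorbed by the vanishing conditions defining $H_A^2(\Om)$, together with $\varphi$ being bounded away from $0$ and $\infty$ on $\pa\Om$. Once this is secured, the remaining steps -- uniqueness of the reproducing kernel, the Cauchy-integral computation of $P_A C_a$, and the algebraic boundary manipulations -- are routine, and the removable-singularity definitions at $w=z_\mu$ need no separate argument since they are forced by the smoothness of $S_\varphi$ and $L_\varphi$.
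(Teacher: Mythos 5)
Your proposal is correct and follows essentially the same route as the paper: transporting everything through multiplication by $q$ (the identification $H_A^2(\Om)=q\,H^2(\pa\Om)$), using the reproducing property plus uniqueness of the weighted Szeg\H{o} kernel to get $S_\varphi=S_A/(q\bar q)$, the orthogonal decomposition of the Cauchy kernel $C_a$ to produce $H_a$ and define $L_A$, and the boundary identities together with the identity principle to get $L_\varphi=L_A\,q(z)/q(w)$. The only minor differences are that the paper describes $H_A^2(\Om)^\perp$ via the adjoint of the Cauchy transform and obtains the $L_\varphi$ formula by matching $L_A$ with the function $L^{(1)}$ from Nehari's construction, whereas you argue both steps directly from the $qH^2$ identification and the two boundary identities, which keeps the argument self-contained for an arbitrary $q\in A^\infty(\Om)$.
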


\begin{rem} 
	It is worthwhile looking at Theorem 6 and Theorem 7 in \cite{Nehari}. In Theorem 6, the extremal function $F$ on domain $D$ that maximizes $\vert f'(\zeta)\vert$ for $\zeta\in D$ where $f\in\mathcal{O}(D)$, $\vert f(z)\vert\leq 1$ for $z\in D$ and $f(\zeta) = f(\alpha_1) = \cdots=f(\alpha_m)=0$ $(\alpha_1,\ldots,\alpha_m\in D)$, is same as
	\begin{equation}
	F(z) = \frac{S_A(z,\zeta)}{L_A(z,\zeta)}
	\end{equation}
	for $A = \{\alpha_1,\ldots,\alpha_m\}$. This can be directly checked using the definitions of $S_A$ and $L_A$.
\end{rem}

\begin{cor}\label{SC2}
	Let $\Om$ be a bounded simply connected domain with $C^{\infty}$ smooth boundary. Let $A=\{(z_1,m_1),\ldots,(z_{\sigma},m_{\sigma})\}$ where $z_i\in\Om$ and $m_i$ are positive integers. Let $p\in A^{\infty}(\Om)$ be a function such that $p(z_i) = 0$ with multiplicity $m_i$ for $i=1,\ldots,\sigma$, and $p$ does not vanish elsewhere. Let 
	\[
	\varphi(z) = \vert p(z)\vert^2, \quad z\in\partial\Om.
	\]
	There exists a nonvanishing function $q\in A^{\infty}(\Om)$ such that $\vert q(z)\vert^2 = \varphi(z)$ for $z\in\partial\Om$. Note that the function $q$ is unique modulo a multiplicative constant of modulus $1$. Then,
	\begin{equation}
	S_A(z,w) = \frac{p(z) \overline{p(w)}}{q(z)\overline{q(w)}} \,S(z,w)
	\quad
	\text{and}
	\quad
	L_A(z,w) = \frac{q(z)p(w)}{p(z)q(w)} \,L(z,w).
	\end{equation}
\end{cor}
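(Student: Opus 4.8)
The plan is to exploit the fact that the weighted kernels $S_{\varphi}$ and $L_{\varphi}$ depend only on the weight $\varphi$, and not on any particular function that represents $\varphi$ as a squared modulus on $\partial\Om$. Here $\varphi = \lvert p\rvert^2$ on $\partial\Om$, while simultaneously $\varphi = \lvert q\rvert^2$ on $\partial\Om$ for the nonvanishing $q$. The idea is to compute $S_{\varphi}$ and $L_{\varphi}$ in two different ways---once using the nonvanishing representative $q$, and once using $p$, whose zero divisor is precisely $A$---and then to equate the two expressions and read off $S_A$ and $L_A$.

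First I would invoke Example \ref{mainexam} with $f = q$. Because $\Om$ is simply connected, Observation 1 guarantees that Nehari's construction produces a nonvanishing $q\in A^{\infty}(\Om)$ with $\lvert q\rvert^2 = \varphi$ on $\partial\Om$, which is exactly the hypothesis of Example \ref{mainexam}. This yields
\[
S_{\varphi}(z,w) = \frac{S(z,w)}{q(z)\,\overline{q(w)}}, \qquad L_{\varphi}(z,w) = \frac{q(z)}{q(w)}\,L(z,w).
\]
Next I would apply Theorem \ref{neh} with $p$ playing the role of the representative function. The remark following that theorem notes that its construction is valid for any function with $\lvert q\rvert^2 = \varphi$ on $\partial\Om$, with no restriction on the location of the zeros; and the zero divisor of $p$ is exactly $A = \{(z_1,m_1),\ldots,(z_{\sigma},m_{\sigma})\}$. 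Since the reproducing kernel $S_A$ of $H^2_A(\Om)$ and the associated kernel $L_A$ depend only on $A$, Theorem \ref{neh} gives, for $w\neq z_{\mu}$,
\[
S_{\varphi}(z,w) = \frac{S_A(z,w)}{p(z)\,\overline{p(w)}}, \qquad L_{\varphi}(z,w) = \frac{L_A(z,w)\,p(z)}{p(w)}.
\]

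Equating the two expressions for $S_{\varphi}$ and solving gives $S_A(z,w) = \bigl(p(z)\overline{p(w)}/(q(z)\overline{q(w)})\bigr)S(z,w)$, and equating the two expressions for $L_{\varphi}$ gives $L_A(z,w) = \bigl(q(z)p(w)/(p(z)q(w))\bigr)L(z,w)$, which are exactly the claimed formulas.

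The only points requiring care are the zeros $z_{\mu}$ of $p$, where the intermediate formulas involve division by $p$. Here I would observe that, because $q$ is nonvanishing on $\overline{\Om}$, the right-hand side $\bigl(p(z)\overline{p(w)}/(q(z)\overline{q(w)})\bigr)S(z,w)$ is holomorphic in $z$ and antiholomorphic in $w$ throughout $\Om\times\Om$, and it vanishes at $z=z_{\mu}$ to precisely the order $m_{\mu}$---consistent with $S_A(\cdot,w)\in H^2_A(\Om)$---so the identity extends across $w=z_{\mu}$ by continuity, using the limit definitions of $S_{\varphi}$ and $L_{\varphi}$ at such points, or by the identity principle. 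The $L_A$ identity is handled the same way, both sides carrying matching simple poles along $z=w$ with residue $1/2\pi$ and matching poles at the points $z_{\mu}$. I do not anticipate a genuine obstacle: the substance of the argument is simply the recognition that the two representatives $p$ and $q$ of the same weight must produce the same intrinsic weighted kernels, together with the bookkeeping needed to identify $S_A$ and $L_A$ with the auxiliary functions of Theorem \ref{neh}.
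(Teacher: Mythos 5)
Your proposal is correct and takes essentially the same route as the paper: the paper's proof likewise combines Observation 1 (the nonvanishing representative $q$, via Example \ref{mainexam}) with Theorem \ref{neh} applied to the representative $p$, whose zero divisor is exactly $A$, and equates the two resulting expressions for $S_{\varphi}$ and $L_{\varphi}$. The only point you omit is the statement's side claim that $q$ is unique modulo a unimodular constant (the paper notes that for two candidates $q_1, q_2$ the quotient $q_1/q_2$ is holomorphic, nonvanishing, and of modulus $1$ on $\partial\Om$, hence constant), but this does not affect the main identities since $q(z)\overline{q(w)}$ and $q(z)/q(w)$ are invariant under replacing $q$ by $e^{i\theta}q$.
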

\begin{proof}
	It follows using Observation $1$ in \ref{SC1} and Theorem \ref{neh}. If $q_1$ and $q_2$ are two candidates for $q$ then $q_1/q_2$ is a non-vanishing holomorphic function from $\Om$ into $\mathbb{D}$ with modulus $1$ on $\partial\Om$. This forces $q_1/q_2$ to be a constant.
\end{proof}

\begin{exam}
	As a very simple example for Corollary \ref{SC2}, let $\Om = \mathbb{D}$ and $A=\{(0, n)\}$ where $n$ is a positive integer. Then $p(z) = z^n$ and $q(z) \equiv 1$. We thus have
	\[
	S_A(z,w) = z^n \bar{w}^n S_{\mathbb{D}}(z,w) = \frac{1}{2\pi} \frac{z^n\bar{w}^n}{1-z\bar{w}}
	\quad
	\text{and}
	\quad
	L_{A}(z,w) = \frac{w^n}{z^n} L_{\mathbb{D}}(z,w) = \frac{1}{2\pi} \frac{w^n}{z^n} \frac{1}{z-w}.
	\]
\end{exam}


	\section{Weighted Carath\'eodory metric and properties}
	
	The Carathéodory metric $c_{\Om}$ on a smoothly bounded domain $\Om\subset\mathbb{C}$ is defined by
	\begin{equation}
	c_{\Om}(z)=\sup\{\vert f'(z)\vert:\, f:\Om\rightarrow\mathbb{D}\,\text{ is holomorphic and }\,f(z)=0\},\quad z\in \Om.
	\end{equation}
	For $z\in \Om$, the solutions of the above extremal problem are of the form $e^{i\theta} f_z$, where $f_z$ denotes the Ahlfors map of $\Om$ based at point $z$. It is known that
	\[
	f_z(w)=\frac{S(w,z)}{L(w,z)},\quad w\in \Om.
	\]
	Since $L(\cdot,z)$ has a pole at $w=z$ with residue $1/2\pi$, we obtain
	\[
	{f_z}'(z)=\lim_{w\rightarrow z}\frac{f_z(w)-f_z(z)}{w-z}=\lim_{w\rightarrow z}\frac{f_z(w)}{w-z}=\lim_{w\rightarrow z}\frac{S(w,z)}{L(w,z) (w-z)}=\frac{\lim_{w\rightarrow z} S(w,z)}{1/2\pi}=2\pi S(z,z).
	\]
	Therefore, 
	\begin{equation}
	c_{\Om}(z)=2\pi\, S(z,z).
	\end{equation}
	
	\begin{defn}[Weighted Carathéodory metric]
		Let $\Omega\subset\mathbb{C}$ be a bounded $n$-connected domain with $C^{\infty}$ smooth boundary such that no boundary curve is an isolated point. Let $\varphi$ be a positive real-valued $C^{\infty}$ smooth function on $\partial\Om$. We define the weighted Carathéodory metric $c_{\varphi}$ with respect to weight $\varphi$ by
		\begin{equation}
		c_{\varphi}(z) = 2\pi\, S_{\varphi}(z,z)
		\end{equation}
		for $z\in\Om$.
	\end{defn}
	
	Let there exist a function $q\in A^{\infty}(\Om)$ such that $\vert q(z)\vert^2 = \varphi(z)$ for $z\in\partial\Om$. Let $z_1,\ldots,z_{\sigma}$ be the zeros of $q$ with multiplicity $m_1,\ldots,m_{\sigma}$. Let $A=\{(z_1,m_1),\ldots,(z_{\sigma},m_{\sigma})\}$. Then,
	\begin{equation}
	c_{\varphi}(z) = 2\pi S_{\varphi}(z,z) = 2\pi \frac{S_A(z,z)}{\vert q(z)\vert^2},\quad\quad z\neq z_{\mu}.
	\end{equation}

	\section{Weighted Szeg\H{o} kernels in terms of the classical kernels}
	
	Let $\Omega\subset\mathbb{C}$ be a bounded $n$-connected domain with $C^{\infty}$ smooth boundary such that no boundary curve is an isolated point. Let $\gamma_j$, $j=1,\ldots,n$ denote the $n$ boundary curves of $\Omega$. The harmonic measure functions $\omega_j$ are unique harmonic functions on $\Omega$ that take value $1$ on $\gamma_j$ and $0$ on $\gamma_i$ for $i\neq j$. Let $F_j'$ denote the holomorphic functions on $\Omega$ given by $(1/2)(\partial/\partial z)\omega_j(z)$.
	
	\medskip
	
	Let $\varphi$ be a positive real-valued $C^{\infty}$ smooth function on $\partial\Om$. It is known (see \cite{Nehari_1}) that 
	\begin{equation}\label{1}
	S_{\varphi}(z,w) S_{1/\varphi}(z,w) = S(z,w)^2 + \sum_{\nu=1}^{n-1} a_{\nu}(w)F_{\nu}'(z)
	\end{equation}
	\begin{equation}\label{2}
	L_{\varphi}(z,w) L_{1/\varphi}(z,w) = L(z,w)^2 + \sum_{\nu=1}^{n-1} \overline{a_{\nu}(w)}F_{\nu}'(z) 
	\end{equation}
	where $a_{\nu}$ are constants depending upon $w\in\Om$.
	
	\medskip
	
	It is known (see \cite{Be0, Bergman}) that if $K$ denotes the classical Bergman kernel of $\Om$ then
	\begin{equation}\label{ks}
	K(z,w) = 4\pi S(z,w)^2 + \sum_{j,k=1}^{n-1} c_{jk}F_{j}'(z) \overline{F_{k}'(w)}
	\end{equation}
	for some constants $c_{jk}$. Infact, if $a\in\Om$ is such that $S(\cdot,a)$ has $n-1$ distinct zeros $a_1,\ldots,a_{n-1}$ then (see \cite{Be0, Be5})
	\begin{equation}
	K(z,a) = 4\pi S(z,a)^2 + 2\pi\sum_{j=1}^{n-1} \frac{K(a_j,a)}{S'(a_j,a)}L(z,a_j)S(z,a).
	\end{equation}
	This is because the $n-1$ functions $L(z,a_j)S(z,a)$ form a basis for $\mathcal{F}'=span_{\mathbb{C}}\{ F_j'(z) : j=1,\ldots, n-1 \}$ (a theorem due to Schiffer, see \cite{Be0, Be5, Sch}). This result due to Schiffer also implies that
	\begin{equation}\label{ks2}
	K(z,w) = 4\pi S(z,w)^2 + \sum_{j,k=1}^{n-1} \lambda_{jk} L(z,a_j)S(z,a) \overline{L(w,a_k)S(w,a)}
	\end{equation}
	for some constants $\lambda_{jk}$ (see \cite{Be4} as well).
	
	\medskip
	
Coming back to (\ref{1}), we will see that
\begin{equation}\label{aid}
a_{\nu}(w) = \sum_{\mu=1}^{n-1} A_{\mu\nu}\overline{F_{\mu}'(w)}
\end{equation}
for some constants $A_{\mu\nu}$. This can be seen following an argument in \cite[see page 137]{Be0}. For $k=1,\ldots,n-1$
\[
\int_{\gamma_{k}} (S_{\varphi}(z,w) S_{1/\varphi}(z,w) - S(z,w)^2 ) \,dz
=
\sum_{\nu=1}^{n-1}a_{\nu}(w)\int_{\gamma_{k}} F_{\nu}'(z)\,dz 
=
\sum_{\nu=1}^{n-1} P_{k\nu} \,a_{\nu}(w)
\]
where $P=[P_{\mu\nu}]$ for $\mu,\nu=1,\ldots,n-1$, denotes the nonsingular matrix of periods 
\[
\int_{\gamma_{\mu}} F_{\nu}'(z) \,dz.
\]
Let $\tilde{\gamma}_{k}$ be a curve that is homotopic to $\gamma_{k}$, but that is inside $\Om$. Then
\begin{equation}\label{3}
\int_{\tilde{\gamma}_{k}} (S_{\varphi}(z,w) S_{1/\varphi}(z,w) - S(z,w)^2 ) \,dz
=
\sum_{\nu=1}^{n-1} P_{k\nu} \,a_{\nu}(w).
\end{equation}
Note that $a_{\nu}(w)$ are antiholomorphic functions in $w$. Now,
\begin{eqnarray*}
S_{\varphi}(z,w) S_{1/\varphi}(z,w) - S(z,w)^2 
&=& 
\overline{S_{\varphi}(w,z) S_{1/\varphi}(w,z) - S(w,z)^2}
= \overline{\sum_{\nu=1}^{n-1} a_{\nu}(z) F_{\nu}'(w)} 
\\&=& \sum_{\nu=1}^{n-1} \overline{a_{\nu}(z)} \overline{F_{\nu}'(w)}.
\end{eqnarray*}
The integral in (\ref{3}) can be approximated by a finite Riemann sum which will therefore lie in the linear span of functions $\{\overline{F_{\nu}'(w)}\}_{\nu=1}^{n-1}$. A limiting argument will show that the integral also lies in the linear span of $\overline{F_{\nu}'(w)}$. Thus, we have
\[
\sum_{\nu=1}^{n-1} Q_{k\nu} \overline{F_{\nu}'(w)} = 
\sum_{\nu=1}^{n-1} P_{k\nu} \,a_{\nu}(w),
\quad \quad k=1,\ldots,n-1
\]
for some constants $Q_{k\nu}$. Let $Q$ denote the matrix $[Q_{\mu\nu}]$. Therefore,
\[
P \,\cdot\,[a_{\nu}(w)]_{\nu=1}^{n-1}  = Q \,\cdot\,[\overline{F_{\nu}'(w)}]_{\nu=1}^{n-1}.
\]
Since $P$ is nonsingular, we obtain (\ref{aid}).
Following the same line of reasoning for (\ref{2}), we have
\begin{equation}\label{Neq4}
	S_{\varphi}(z,w) S_{1/\varphi}(z,w) = S(z,w)^2 + \sum_{j,k=1}^{n-1} a_{jk}\,F_{j}'(z)\overline{F_k'(w)}
\end{equation}
\begin{equation}\label{Neq5}
	L_{\varphi}(z,w) L_{1/\varphi}(z,w) = L(z,w)^2 + \sum_{j,k=1}^{n-1} \overline{a_{jk}}\, F_{j}'(z) F_{k}'(w) 
\end{equation}
for some constants $a_{jk}$. This also implies that
\begin{equation}\label{Neq6}
	S_{\varphi}(z,w) S_{1/\varphi}(z,w) = S(z,w)^2 + \sum_{j,k=1}^{n-1} \alpha_{jk}\,L(z,a_j)S(z,a) \overline{L(w,a_k)S(w,a)}
\end{equation}
\begin{equation}\label{Neq7}
	L_{\varphi}(z,w) L_{1/\varphi}(z,w) = L(z,w)^2 + \sum_{j,k=1}^{n-1} \overline{\alpha_{jk}}\, L(z,a_j)S(z,a) L(w,a_k)S(w,a)
\end{equation}
for some constants $\alpha_{jk}$.
Relations (\ref{Neq1}), (\ref{Neq2}), Theorem \ref{neh}, relations (\ref{Neq4}) -- (\ref{Neq7}) together may help to shed more light on the properties of the weighted Szeg\H{o} and weighted Garabedian kernel.

\medskip

Finally, we also observe from the relations (\ref{ks}), (\ref{ks2}), (\ref{Neq4}) and (\ref{Neq6}) that
\begin{equation}
4\pi\,S_{\varphi}(z,w) S_{1/\varphi}(z,w) = K(z,w) + \sum_{j,k=1}^{n-1} b_{jk}\,F_{j}'(z)\overline{F_k'(w)}
\end{equation}
\begin{equation}
4\pi\, S_{\varphi}(z,w) S_{1/\varphi}(z,w) = K(z,w) + \sum_{j,k=1}^{n-1} \beta_{jk}\,L(z,a_j)S(z,a) \overline{L(w,a_k)S(w,a)}
\end{equation}
for some constants $b_{jk}$ and $\beta_{jk}$.

\medskip

Thus, the weighted Szeg\H{o} kernels corresponding to weights $\varphi$ and $1/\varphi$ can be expressed in terms of the classical kernel functions.

\end{document}